\newcommand{\reals}{\mathbb{R}}
\newcommand{\indicator}{\mathbf{1}}
\newcommand{\pr}{\mathbb{P}}
\newcommand{\ex}{\mathbb{E}}
\newcommand{\ep}{\begin{flushright} \vspace{-8.5mm}$\square$ \end{flushright}}
\newcommand{\ga}{[\mathbf{G1}]}
\newcommand{\gb}{[\mathbf{G2}]}
\newcommand{\gc}{[\mathbf{G3}]}
\newcommand{\gspace}{\mathcal{G}}
\newcommand{\gkr}{\gspace_{K,\rad,\cz}}
\newcommand{\rad}{\beta}
\newcommand{\cz}{\gamma}
\newcommand{\dz}{\rho}
\newcommand{\be}{\beta_{\varepsilon}}
\newcommand{\di}{m} %dimension
\newcommand{\ns}{\mspace{0.0mu}}
\newcommand{\vect}[1]{\boldsymbol{#1}}
\providecommand{\abs}[1]{\lvert#1\rvert}
\providecommand{\norm}[1]{\lVert#1\rVert}
\newcounter{lemmacount} \setcounter{lemmacount}{0}
\newcounter{corcount} \setcounter{corcount}{0}
\newcounter{propcount} \setcounter{propcount}{0}
\newcounter{examples} \setcounter{examples}{1}
\newcounter{remarks} \setcounter{remarks}{1}
\newtheorem{theorem}{Theorem}%[section]
\newtheorem{lemma}[lemmacount]{Lemma}
\newtheorem{proposition}[propcount]{Proposition}
\newtheorem{corollary}[corcount]{Corollary}
\newenvironment{remark}[1][]{\medskip\noindent {\textbf{Remark
\arabic{remarks}.$\,$} \stepcounter{remarks}}}
\begin{document}

\title{On approximation rates for boundary crossing probabilities for the multivariate Brownian motion process}

\author{S.\ McKinlay
\footnote{
Department of Mathematics and Statistics, University of Melbourne, Parkville 3010, Australia. E-mail: s.mckinlay@ms.unimelb.edu.au.}
\ and \ K.\ Borovkov\footnote{
Department of Mathematics and Statistics, University of Melbourne, Parkville 3010, Australia. E-mail: borovkov@unimelb.edu.au.}}

\date{}

\maketitle

\begin{abstract}
\noindent
Motivated by an approximation problem from mathematical finance, we analyse the stability of the boundary crossing probability for the multivariate Brownian motion process, with respect to small changes of the boundary. Under broad assumptions on the nature of the boundary, including the Lipschitz condition (in a Hausdorff-type metric) on its time cross-sections, we obtain an analogue of the Borovkov and Novikov (2005) upper bound for the difference between boundary hitting probabilities for Òclose boundariesÓ in the univariate case. We also obtained upper bounds for the first boundary crossing time densities.
\vspace{0.2cm}

\emph{Key words and phrases:} multivariate Brownian motion; boundary crossing probability; first hitting time density; approximation rate; Hausdorff distance.\vspace{0.2cm}

\emph{AMS Subject Classifications:}  primary 60J65; secondary 60G40.
\end{abstract}

%===========================================================
%                                   	   Introduction
%===========================================================

\begin{section}{Introduction and main results}

Let $\vect{W} = \{\vect{W}_t = (W_t^{(1)}, \ldots ,  W_t^{(m)})\}_{t \ge 0}$ be the standard $\di$-dimensional Brownian motion process, $\vect{W}_0 = \vect{0}$. For a fixed $T< \infty$, let $\gspace$ be the class of open sets $G \subset (0,T) \times \reals^{\di}$ (the first component representing time), with $(0,\vect{0}) \in \partial G$.

In a number of applied problems (one notable example being  barrier options' pricing), one needs to compute the probability
\begin{equation*}
 P(G) := \pr((t, \vect{W}_t) \in G, t \in (0,T))
\end{equation*}
for $\vect{W}$ to stay within a given set in the time-space $G$ during the time interval $(0,T)$. It is well known that that can be done by solving the respective boundary value problem for the heat equation in $\di$ dimensions (see e.g.\ Section~4.3C in~\cite{Karatzas} for a discussion of the univariate case and \cite{Patie08} for an efficient numerical scheme for computing $P(G)$ for cylindric sets $G$). However, even in the univariate case, a closed form expression for the probability $P(G)$ is only available in a few special cases, the most famous one being when $G$ is specified by a one-sided linear boundary. There is vast literature devoted to different approaches to computing boundary crossing probability and first hitting time densities for the univariate Brownian motion. For a recent bibliography of the published work on that topic see~e.g.~\cite{Kahale}.

Much less was done in the multivariate case. A number of studies have considered the probability
\[
p_{\vect{x}}(C) := \pr\big(\vect{x} + \vect{W}_t \in C, t \in (0,T)\big) \equiv P((0,T) \times (C-\vect{x})), \quad \vect{x} \in C,
\]
when $C \subset\reals^m$ is a cone, the simplest form of which is defined as follows. For $\vect{y} \in \reals^m \backslash \{\vect{0}\}$, let $\theta(\vect{y})$ be the angle between $\vect{y}$ and the point $(1,0, \ldots, 0) \in \reals^m$.  A cone of angle $\alpha \in (0, \pi)$ is defined as $\mathcal{C}_{\alpha} := \{\vect{y} \in \reals^m: 0< \theta(\vect{y}) < \alpha\}$.

It was apparently F.~Spitzer who was the first to consider the probability $p_{\vect{x}}(\mathcal{C}_{\alpha})$ in the two-dimensional case. In~\cite{Spitzer}, he gave an integral transform for the function $p_{\vect{x}}(\mathcal{C}_{\alpha})$. This probability was later computed explicitly in \cite{Iyengar}.

More recently, the case $m \ge 3$ has been considered for ``generalised cones''  defined as follows. If $D$ is a proper open connected subset of the unit sphere $\mathbb{S}^{m-1}$ in $\reals^m$, the  generalised cone  $\mathcal{C}_{D}$ generated by $D$ is the set of all rays emanating from the origin~$\vect{0}$ and passing through~$D$. Under some technical restrictions on~$D$, a representation for $p_{\vect{x}}(\mathcal{C}_{D})$ as an infinite series involving confluent hypergeometric functions and eigenfunctions of the Lapace-Beltrami operator on $\mathbb{S}^{m-1}$ was given in \cite{DeBlassie}. This result was later strengthened in \cite{Banuelos}, where the same analytic formula was shown to hold for a larger class of generalized cones. An alternative technique based on the reflection principle was used in \cite{Muirhead} to compute $p_{\vect{x}}(C)$ in case of ``wedges" $C$.

In the case of general $G \in \gspace$, a possible approach to approximate evaluation of $P(G)$ in nontrivial univariate cases is to approximate $G$ with another set $\widetilde{G} \in \gspace$ for which the computation of $P(\widetilde{G})$ is tractable. For instance, when
\[
G=\{(t,x): t\in (0,T), \ g_- (t) < x < g_+ (t)\},
\]
where $g_-(t) < g_+ (t)$ are smooth enough continuous functions,  one could use a  $\widetilde{G}$ of the same nature but with piece-wise linear boundaries $\widetilde{g}_\pm $ approximating $g_\pm,$ respectively. Recall that, for such boundaries,
the problem of calculating  $P(\widetilde{G})$
reduces (by conditioning on the process' values at the boundaries' ``junction points'') to
calculating the values of $k$-dimensional normal CDFs. For more detail on this technique and a similar approach in the case of the so-called  generalised Daniels' boundaries~\cite{DiNardo01}, see e.g.\ \cite{Borovkov05} and references therein.

To justify the use of such  approximations, however, one must  provide  bounds for the approximation error  $\abs{P(G)-P(\widetilde{G})}$. In the univariate case,  rather tight bounds of such type were obtained for the one-dimensional Brownian motion (see \cite{Borovkov05}) and then extended to time-homogeneous univariate diffusions process (see \cite{Downes08}).

The aim of this note is to extend the outlined approximation approach to the multivariate case and provide bounds for approximation errors. Our results below are also of interest for the theory of boundary value problems for parabolic partial differential equations.

For $H \subset [0,T] \times \reals^{\di}$, let
\begin{equation}\label{Gt}
H_t := \{ \vect{x} = (x_1, \ldots, x_m) \in \reals^{\di}: (t,\vect{x}) \in H\}, \quad t \in [0,T].
\end{equation}
That is, $H_t$ is the time $t$ section of $H$. For $A \subset \reals^{\di}$ and $v \in \reals$, introduce the sets
\begin{equation*}
A^{(v)} := \left\{ \begin{array}{ll}
\{\vect{x} \in \reals^{\di}: \dz(\vect{x},A) < v \}, & v  > 0,\vspace{0.1cm}\\
\{\vect{x} \in \reals^{\di}: \dz(\vect{x},A^c) \le v \}^c, & v \le 0,
\end{array}\right.
\end{equation*}
where $\dz(\vect{x},A) := \inf_{\vect{y} \in A}  \norm{\vect{x}-\vect{y}}$, $\norm{\cdot}$ is the Euclidian norm in $\reals^{\di}$, and $A^c$ is the complement of $A$. For $r>0$, $\vect{x} \in \reals^m$, by $B_r(\vect{x}) := \{ \vect{y} \in \reals^m: \norm{\vect{x}-\vect{y}} < r\}$ we will denote the open ball of radius $r$ with centre at $\vect{x}$.

For $H \subset [0,T] \times \reals^{\di}$, let
\begin{equation*}
H^{(v)} := \{(t,\vect{x}): t \in [0,T], \vect{x} \in H_t^{(v)} \}, \quad v \in \reals.
\end{equation*}

The Hausdorff distance between sets $A, \widetilde{A} \subset \reals^{\di}$ is defined by
\begin{equation*}
\dz_h(A,\widetilde{A}) := \inf\{ \varepsilon>0 :  A \subset \widetilde{A}^{(\varepsilon)} \text{ and } \widetilde{A} \subset A^{(\varepsilon)} \}.
\end{equation*}
It will be convenient for us to use the metric
\begin{equation*}
\dz_H(A,\widetilde{A}) := \max \{ \dz_h(A,\widetilde{A}), \dz_h(A^c,\widetilde{A}^c) \}.
\end{equation*}

% -------------- Conditions on G to bound first crossing density ----------------------

For positive numbers $K,\rad,\cz$, introduce the class $\gkr \subset \gspace$ of sets $G$ satisfying the following conditions on their cross-sections.
\begin{itemize}
\item[{$\ga$}] The following Lipschitz condition holds:
\begin{equation*}
\dz_H(G_s, G_t) \le K(t-s), \quad 0 < s<t < T.
\end{equation*}

\item[{$\gb$}] For any $t \in (0,T)$ and $\vect{g} \in \partial G_t$, there exists a ball $B_{\rad}(\vect{y}) \subset  G_t^c$ with $\rad>0$ and $\vect{g} \in \partial B_{\rad}(\vect{y})$.

\item[{$\gc$}] For any $t \in (0,T)$, there exists a $v_0>0$ such that
\begin{equation*}
\ex(1+\norm{\vect{W}_t}; 0 < \dz(\vect{W}_t,G_t^c) < v) < \cz v, \quad 0 < v < v_0.
\end{equation*}
\end{itemize}

%===========================================================
%                                       THE MAIN THEOREM
%===========================================================

The main result of the present paper is the following bound.

\begin{theorem}\label{main}
If $G \in \gkr$, then there exists a $c=c(K,\rad,\cz) < \infty$ such that
\begin{equation}\label{main_bound}
P \left(G^{(\varepsilon)} \right) \le P(G)  + c \varepsilon, \quad \varepsilon > 0.
\end{equation}
\end{theorem}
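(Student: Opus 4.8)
The plan is to bound $P(G^{(\varepsilon)}) - P(G)$ by the probability that the process $(t,\vect{W}_t)$ lies inside $G^{(\varepsilon)}$ throughout $(0,T)$ but inside the thin ``shell'' $G^{(\varepsilon)}\setminus G$ at some time point, and then to control this via a first-entry decomposition into the shell. Write $\tau$ for the first time $t\in(0,T)$ at which $\vect{W}_t\in G_t^{(\varepsilon)}\setminus G_t$ (i.e.\ $0\le \dz(\vect{W}_t,G_t^c)<\varepsilon$ when $\vect{W}_t\in G_t$, together with the boundary), setting $\tau=\infty$ if there is no such time. On the event $\{(t,\vect{W}_t)\in G^{(\varepsilon)}\ \forall t\in(0,T)\}\setminus\{(t,\vect{W}_t)\in G\ \forall t\in(0,T)\}$ the path must enter the shell, so $\tau<T$; conditioning on $\mathscr{F}_\tau$ and using the strong Markov property,
\begin{equation*}
P(G^{(\varepsilon)}) - P(G) \le \ex\bigl[\indicator\{\tau<T\}\, h(\tau,\vect{W}_\tau)\bigr],
\end{equation*}
where $h(s,\vect{x})$ is the probability that a Brownian motion started at $\vect{x}$ at time $s$ stays in $G^{(\varepsilon)}$ on $(s,T)$. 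The point of condition $\gb$ is that at the entry time the process is within distance $\varepsilon$ of the boundary of a ball of radius $\rad$ sitting in the complement; this should give a bound $h(s,\vect{x})\le C\,\varepsilon/\sqrt{T-s}$ (uniformly, using $\gb$ and $\ga$ to propagate the exterior ball condition forward in time at a controlled rate, so the obstacle does not shrink away too fast), by a one-dimensional comparison with the probability that a scalar Brownian motion avoids a half-line, which is of order (initial distance)$/\sqrt{\text{time}}$.

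Next I would estimate $\ex[\indicator\{\tau<T\}(T-\tau)^{-1/2}]$. The idea is that the process can only be in the shell for a $t$ if $0<\dz(\vect{W}_t,G_t^c)<\varepsilon$, and condition $\gc$ says $\pr(0<\dz(\vect{W}_t,G_t^c)<\varepsilon)<\cz\varepsilon$ for each fixed $t$ (for $\varepsilon$ small; the weight $1+\norm{\vect{W}_t}$ will be used to control a tail contribution). Fubini then gives $\int_0^T \pr(\vect{W}_t\in G_t^{(\varepsilon)}\setminus G_t)\,dt \le \cz\varepsilon T$ for small $\varepsilon$, so the expected Lebesgue measure of time spent in the shell is $O(\varepsilon)$. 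To convert an occupation-time bound into a bound on the hitting probability weighted by $(T-\tau)^{-1/2}$, I would split $(0,T)$ at $T-\delta$: on $\{\tau\le T-\delta\}$ use $h\le C\varepsilon/\sqrt\delta$ together with $\pr(\tau\le T-\delta)\le 1$, giving $C\varepsilon/\sqrt\delta$; on $\{\tau>T-\delta\}$, after the entry the process only needs to survive a short time, and here I would instead bound the contribution directly by the probability of being in the shell at some time in $(T-\delta,T)$ --- but this requires more than a single-time estimate, so one chains over $O(\delta/\varepsilon^2)$ sub-intervals of length $\sim\varepsilon^2$, on each of which $\gc$ gives probability $O(\varepsilon)$ of the endpoint being in a slightly fattened shell (using $\ga$ so the shell at nearby times is comparable) and the Markov property controls the path in between; summing yields $O(\delta/\varepsilon)\cdot O(\varepsilon)=O(\delta)$. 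Optimising over $\delta$ --- or more simply taking $\delta$ a fixed fraction of $T$ and iterating the whole argument on $(0,T-\delta)$, $(0,T-2\delta)$, \ldots\ --- produces the linear bound $c\varepsilon$.

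The main obstacle I anticipate is the second step, specifically making the passage from the pointwise-in-time small-shell estimate $\gc$ to a bound on the hitting probability that integrates the singular factor $(T-s)^{-1/2}$ near the terminal time without losing the linear rate in $\varepsilon$. A single-time-marginal bound is too weak to control $\pr(\tau\in ds)$, so one genuinely needs the chaining-over-short-intervals argument, and the delicate point there is that the exterior-ball radius $\rad$ guaranteed by $\gb$ at time $t$ must be shown to persist (perhaps shrunk by $O(K\cdot(\text{time increment}))$) at nearby times via the Lipschitz condition $\ga$ on the complements $G_t^c$, so that the one-dimensional comparison estimate remains valid with a uniform constant. Conditions $\ga$ (Lipschitz motion of cross-sections), $\gb$ (uniform exterior ball), and $\gc$ (controlled shell mass, with the $1+\norm{\vect{W}_t}$ weight handling the unbounded-state tail) are each used precisely at one of these junctures, which is why all three are needed; assembling them into a single constant $c=c(K,\rad,\cz)$ is then bookkeeping.
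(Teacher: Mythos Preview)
Your first step is essentially the paper's: condition on the first hitting time $\tau$ of $\partial G$, apply the strong Markov property, and bound the post-$\tau$ survival probability in $G^{(\varepsilon)}$ by $C\varepsilon(1/\sqrt{T-\tau}+\text{const})$ via $\gb$ and $\ga$. This matches their Lemma~5 and the cone comparison around it.

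The gap is in your second step. Your chaining contains an arithmetic slip: with $O(\delta/\varepsilon^{2})$ sub-intervals and a union bound giving $O(\varepsilon)$ each from $\gc$, you obtain $\pr(\tau\in(T-\delta,T))=O(\delta/\varepsilon)$, not $O(\delta)$. Combining with the first piece $C\varepsilon/\sqrt{\delta}$ and optimising over $\delta$ then yields only $O(\varepsilon^{1/3})$; even your stated $O(\delta)$ bound would, after optimisation, give $O(\varepsilon^{2/3})$, not $O(\varepsilon)$. The dyadic iteration you mention does not rescue this either: with $\pr(\tau\in(T-2^{-k}T,\,T-2^{-k-1}T))$ controlled only by $O(2^{-k}T/\varepsilon)$, the sum $\sum_{k}C\varepsilon\,(2^{-k}T)^{-1/2}\cdot O(2^{-k}T/\varepsilon)$ is $O(1)$. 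The underlying reason is that your chaining bounds $\pr(\exists\,t\in(T-\delta,T):\vect{W}_t\in\text{shell})$, which ignores the constraint $\tau>T-\delta$ and is therefore far too crude.

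What is actually needed is a bound $\pr(\tau\in(t,t+h))\le Ch$ for $t$ bounded away from $0$, i.e.\ a bounded density for $\tau$ near the terminal time. The paper obtains this (their Proposition~1) by conditioning on $\vect{W}_t=\vect{z}$ and factorising
\[
\pr(\tau\in(t,t+h)\mid \vect{W}_t=\vect{z})=\pr(\tau>t\mid \vect{W}_t=\vect{z})\cdot\pr(\tau_t<t+h\mid \vect{W}_t=\vect{z}).
\]
The first factor is $O(r(\vect{z}))$ with $r(\vect{z})=\dz(\vect{z},\partial G_t)$, proved by time-reversal and the same exterior-cone comparison you used in your first step; the second is $O(\exp(-r(\vect{z})^{2}/(2mh)))$. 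Integrating their product against the law of $\vect{W}_t$ and invoking $\gc$ (this is where the weight $1+\norm{\vect{W}_t}$ is really used, to absorb a factor $\norm{\vect{z}}$ arising in the first bound) gives the $O(h)$ estimate. Condition $\gc$ applied only at discrete grid times is too weak on its own; the missing idea is that paths close to $\partial G_t$ at time $t$ are themselves unlikely to have survived in $G$ on $(0,t)$, and this second smallness must be combined multiplicatively with the shell-mass estimate.
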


Note that, in the important special case of convex cross-sections $G_t$, conditions  $\gb$ and  $\gc$ are superfluous, as the following corollary shows.

\begin{corollary}\label{convex_cor}
Assume that $G \in \gspace$ satisfies $\ga$ and $G_t$ is convex for any $t \in (0,T)$. Then $G$ also satisfies $\gb$ with any $\rad>0$ and $\gc$ for some $\cz < \infty$, and so the bound from Theorem \ref{main} holds true.
\end{corollary}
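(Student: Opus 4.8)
The plan is to show that convexity of the cross-sections, in the presence of $\ga$, already forces $\gb$ (with an arbitrary $\rad>0$) and $\gc$ (with some finite $\cz$); the final claim is then immediate from Theorem~\ref{main}.

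\emph{Verification of $\gb$.} Fix $t\in(0,T)$ and $\vect{g}\in\partial G_t$. Since $G_t$ is open and convex, the supporting hyperplane theorem furnishes a unit vector $\vect{n}$ with $G_t\subset\{\vect{x}:\langle\vect{x}-\vect{g},\vect{n}\rangle<0\}$. Given any $\rad>0$, set $\vect{y}:=\vect{g}+\rad\vect{n}$. The open ball $B_{\rad}(\vect{y})$ lies in the complementary open half-space $\{\vect{x}:\langle\vect{x}-\vect{g},\vect{n}\rangle>0\}$, hence is disjoint from $G_t$, i.e.\ $B_{\rad}(\vect{y})\subset G_t^c$; and $\norm{\vect{y}-\vect{g}}=\rad$ gives $\vect{g}\in\partial B_{\rad}(\vect{y})$. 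Thus $\gb$ holds for every $\rad>0$.

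\emph{Verification of $\gc$.} Fix $t\in(0,T)$. Since $G_t$ is open, $\{\,0<\dz(\vect{W}_t,G_t^c)<v\,\}=\{\vect{W}_t\in L_v\}$, where $L_v:=\{\vect{x}\in G_t:\dz(\vect{x},\partial G_t)<v\}$ is the inner $v$-layer of $G_t$. Denoting by $p_t(\vect{x})=(2\pi t)^{-\di/2}e^{-\norm{\vect{x}}^2/(2t)}$ the density of $\vect{W}_t$, it suffices to bound $\int_{L_v}(1+\norm{\vect{x}})\,p_t(\vect{x})\,d\vect{x}$. I would estimate $\operatorname{Leb}\!\big(L_v\cap B_R(\vect{0})\big)$ for each $R>0$ using three standard convex-geometric facts: \emph{(i)} $L_v\cap B_R(\vect{0})\subset\big(G_t\cap B_{R+v}(\vect{0})\big)\setminus\big(G_t\cap B_{R+v}(\vect{0})\big)_{-v}$, since replacing $G_t^c$ by the larger set $\big(G_t\cap B_{R+v}(\vect{0})\big)^c$ can only decrease distances to it; \emph{(ii)} for any convex body $K$, $\operatorname{Leb}(K\setminus K_{-v})\le v\,S(K)$, where $K_{-v}$ is the inner $v$-parallel body and $S(K)$ the surface area of $K$ --- this follows from the coarea formula, the inward normal map of the boundary of a convex set having Jacobian at most $1$; and \emph{(iii)} $S$ is monotone under inclusion of convex bodies, whence $S\big(G_t\cap B_{R+v}(\vect{0})\big)\le S\big(B_{R+v}(\vect{0})\big)=\di\,\omega_{\di}\,(R+v)^{\di-1}$, with $\omega_{\di}$ the volume of the unit ball in $\reals^{\di}$. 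Combining \emph{(i)}--\emph{(iii)} yields $\operatorname{Leb}\!\big(L_v\cap B_R(\vect{0})\big)\le v\,\di\,\omega_{\di}\,(R+v)^{\di-1}$.

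Next I would partition $\reals^{\di}$ into the annuli $A_k:=\{\vect{x}:k\sqrt{t}\le\norm{\vect{x}}<(k+1)\sqrt{t}\}$, $k\ge0$; on $A_k$ one has $p_t\le(2\pi t)^{-\di/2}e^{-k^2/2}$ and $1+\norm{\vect{x}}\le 1+(k+1)\sqrt{t}$, while $\operatorname{Leb}(L_v\cap A_k)\le v\,\di\,\omega_{\di}\big((k+1)\sqrt{t}+v\big)^{\di-1}$ by the preceding bound. Summing over $k\ge0$ --- the resulting series converges owing to the factors $e^{-k^2/2}$ --- gives $\int_{L_v}(1+\norm{\vect{x}})\,p_t(\vect{x})\,d\vect{x}\le\cz v$ for all $v$ less than some $v_0(t)>0$ and a suitable finite $\cz$. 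This is $\gc$. Therefore $G\in\gkr$ and \eqref{main_bound} follows from Theorem~\ref{main}.

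I expect the only genuinely delicate ingredient to be estimate \emph{(ii)}, the sharp linear-in-$v$ bound on the measure of the inner boundary layer of a (possibly non-smooth) convex body; the remaining steps are essentially bookkeeping, the Gaussian weight entering only to make the annular sum converge when $G_t$ is unbounded, and one must keep track of the dependence of the constants on $t$ to match the form of $\gc$ required for membership in $\gkr$.
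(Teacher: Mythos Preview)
Your argument follows the same line as the paper's: $\gb$ via a supporting hyperplane, and $\gc$ by bounding the volume of the inner $v$-layer of $G_t$ inside balls using surface-area monotonicity for convex bodies (the paper invokes Cauchy's formula; you invoke the coarea estimate $\operatorname{Leb}(K\setminus K_{-v})\le vS(K)$, which is equivalent here), then summing over annuli weighted by the Gaussian density. The ingredients (i)--(iii) are all correct.

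There is, however, one genuine gap. Condition $\gc$ requires a \emph{single} constant $\cz$ valid for all $t\in(0,T)$, and your annular estimate does not deliver this. With annuli of width $\sqrt{t}$, the $k=0$ term carries the density bound $p_t\le(2\pi t)^{-\di/2}$ against a layer volume of order $v\,t^{(\di-1)/2}$, producing a contribution $\sim v\,t^{-1/2}$; with fixed-width annuli the same $t^{-1/2}$ blow-up appears in the innermost shell. You note that ``one must keep track of the dependence of the constants on $t$'' but never explain how to remove it, and in fact it cannot be removed by bookkeeping alone: one needs to know that $\partial G_t$ stays away from the origin for small $t$, which is not a consequence of convexity. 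The paper closes this gap by using $\ga$. Since $(0,\vect{0})\in\partial G$ and the cross-sections vary Lipschitz-continuously, either $P(G)=0$ (and the claim is vacuous) or there is a $\delta>0$ with $B_{\delta/2}(\vect{0})\subset G_t$ for all $t\le t_0:=\delta/(2K)$; taking $v_0\le\delta/4$ then forces $L_v\cap B_{\delta/4}(\vect{0})=\emptyset$, so the density on $L_v$ is uniformly bounded for $t\in(0,t_0]$, while for $t\in(t_0,T)$ your (or the paper's) estimate already gives a bounded constant. Inserting this observation into your argument makes it complete.
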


The next result is a trivial consequence of Theorem~\ref{main}. We state it here because  it is the natural multivariate extension of the main bound from~\cite{Borovkov05}.

\begin{corollary}\label{bound_cor}
Suppose $G \in \gkr$. For any $\varepsilon >0$, if sets $G', G'' \in \gspace$ are such that $G \subset G' \subset G^{(\varepsilon)}$, $G \subset G'' \subset G^{(\varepsilon)}$, then
\begin{equation}\label{corr_bound}
\abs{P(G')- P(G'')} < c \varepsilon
\end{equation}
for some constant $c=c(K,\rad,\cz) < \infty$.
\end{corollary}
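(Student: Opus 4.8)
The plan is to derive the bound directly from Theorem~\ref{main} by a simple sandwiching argument. Fix $\varepsilon > 0$ and suppose $G', G'' \in \gspace$ satisfy $G \subset G' \subset G^{(\varepsilon)}$ and $G \subset G'' \subset G^{(\varepsilon)}$. First I would observe that the event $\{(t, \vect{W}_t) \in H, \, t \in (0,T)\}$ is monotone in $H$ with respect to set inclusion, so that $H \subset H'$ implies $P(H) \le P(H')$. Applying this to the chain $G \subset G' \subset G^{(\varepsilon)}$ gives
\begin{equation*}
P(G) \le P(G') \le P\bigl(G^{(\varepsilon)}\bigr),
\end{equation*}
and likewise $P(G) \le P(G'') \le P(G^{(\varepsilon)})$. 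Hence both $P(G')$ and $P(G'')$ lie in the interval $[P(G), P(G^{(\varepsilon)})]$, whose length is at most $c\varepsilon$ by Theorem~\ref{main} (with the same constant $c = c(K,\rad,\cz)$ furnished there, since $G \in \gkr$). Therefore $\abs{P(G') - P(G'')} \le P(G^{(\varepsilon)}) - P(G) \le c\varepsilon$, which is the asserted bound~\eqref{corr_bound} (the strict inequality can be obtained, if desired, by noting that the constant in Theorem~\ref{main} can be taken slightly larger, or simply by using $c' > c$).

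There is essentially no obstacle here: the only facts needed are the monotonicity of $P(\cdot)$ under inclusion, which is immediate from the definition of $P(H)$ as the probability of an event that grows with $H$, and the one-sided estimate of Theorem~\ref{main}. The one point worth checking carefully is that $G', G''$ are indeed admissible arguments of $P$, i.e.\ that they belong to $\gspace$; this is part of the hypothesis, so nothing further is required. I would present this as a three-line argument immediately following the statement.
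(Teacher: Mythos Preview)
Your argument is correct and is exactly the ``trivial consequence'' the paper has in mind: it does not even write out a separate proof, simply noting that Corollary~\ref{bound_cor} follows immediately from Theorem~\ref{main}. Your sandwiching via monotonicity of $P(\cdot)$ under inclusion is the intended route, and your remark about the strict versus non-strict inequality is a fair handling of a cosmetic discrepancy in the statement.
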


\begin{remark}
The form of the statement in the above assertion  is somewhat different from the one in the univariate case where we basically estimated the difference $P \big(G^{(\varepsilon)} \big) - P \big(G^{(-\varepsilon)} \big)$. The multivariate situation is noticeably more complicated. In particular, in $\di \ge 2$ dimensions, for a set $G \in \gkr$ it is not necessarily true that  $G^{(-\varepsilon)} \in\gkr$, even if we allow the parameters of the class $\gkr$ in the last instance to be different from those for  the one containing~$G$. One implication of that observation is that,  without  some additional restrictive assumptions, the estimation of $P \big(G^{(\varepsilon)} \big) - P \big(G^{(-\varepsilon)} \big)$ becomes then impossible.
On the other hand, the framework of our Theorem~\ref{main} is quite simple and appears to be the most natural in the multivariate setup.
\end{remark}

\end{section}

%===========================================================
%===========================================================
%----------------------------------------   PROOFS  -----------------------------------------------
%===========================================================
%===========================================================

\begin{section}{Proofs}

Without loss of generality, we can assume in this section that $T=1$.

For a measurable $H \subset [0,1] \times \reals^{\di}$, let
\begin{equation}\label{tau_set}
\tau(H) := \inf \{t>0: (t,\vect{W}_t) \in \partial H \},
\end{equation}
setting $\tau(H): = 1$ when $\vect{W}_t \notin \partial H_t$, $t \in (0,1)$. Letting $\tau := \tau(G)$, $\tau^{(\varepsilon)} := \tau(G^{(\varepsilon)})$, we have from the Markov property of the Brownian motion that, for $\varepsilon >0$,
\begin{equation}\label{DeG_TPF}
D_{\varepsilon}(G) := P \big(G^{(\varepsilon)} \big) - P(G) = \int_{(0,1)}  \pr \big (\tau^{(\varepsilon)}=1 | \tau = t \big) \pr( \tau \in dt).
\end{equation}

%===========================================================
%                                       TAU BOUND PROPOSITION
%===========================================================

The following proposition, establishing absolute continuity of the distribution of $\tau$ and providing upper bounds for its density, is of independent interest.
\begin{proposition}\label{tau_thm}
The random variable $\tau$ has density $p$ on $(0,1)$ satisfying
\begin{equation*}
p(t) \le 8m^2 \cz \left\{ \begin{array}{ll}
\sqrt{\frac{1}{\pi t}} + \frac{\di-1}{2\rad-Kt} +2K + \frac{2}{t}, & t \in (0,\min \{\rad/K,1 \}),\vspace{0.1cm}\\
\sqrt{\frac{K}{\pi \rad}}  + \frac{ \rad+2}{2t-\rad/K} +  \frac{\di-1}{\rad} + K, &  t \in  [\min \{\rad/K,1 \}, 1).
\end{array}\right.\\
\end{equation*}
\end{proposition}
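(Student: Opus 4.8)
The plan is to prove the asserted pointwise bound by estimating $\pr(\tau\in(t,t+h))$ for small $h>0$: taking $t$ over compact subintervals of $(0,1)$ then gives absolute continuity of the law of $\tau$ there, and, since the estimate holds for each $t$, the density is bounded as claimed. Write $\ndist_t$ for the density of $\vect W_t$. By the Markov property $\pr(\tau\in(t,t+h))=\ex[\indicator\{\tau>t\}\,\pr(\tau\le t+h\mid\mathscr F_t)]$, and on $\{\tau>t\}$ one has $\vect W_t\notin\partial G_t$. If the path meets $\partial G$ at some time $t+r$, $r\in(0,h]$, then $\ga$ (used together with $G_{t+r}^c\subset(G_t^c)^{(Kr)}$, resp.\ $\overline{G_{t+r}}\subset\{\vect x:\dz(\vect x,G_t)\le Kr\}$) forces the increment oscillation $M_h:=\max_{0\le r\le h}\norm{\vect W_{t+r}-\vect W_t}$, which is independent of $\mathscr F_t$, to exceed $\dz(\vect W_t,\partial G_t)-Kh$. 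Hence, with $q(t,\cdot)$ the sub-probability density of $\vect W_t$ on $\{\tau>t\}$,
\[
\pr(\tau\in(t,t+h))\le\int_{\reals^{\di}}q(t,\vect w)\,\pr\bigl(M_h\ge\dz(\vect w,\partial G_t)-Kh\bigr)\,d\vect w ,
\]
and a union bound over coordinates with the Gaussian tail gives $\pr(M_h\ge x)\le 2\di\exp(-x^2/(2\di h))$, so $\int_0^{\infty}x\,\pr(M_h\ge x-Kh)\,dx=2\di^2h+o(h)$ as $h\to0$.

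The crux is the \emph{linear} decay of $q$ near $\partial G_t$:
\[
q(t,\vect w)\le c_0\,\ndist_t(\vect w)\,\dz(\vect w,\partial G_t)\Bigl(\sqrt{\tfrac{1}{\pi t}}+\tfrac{\di-1}{2\rad-Kt}+2K+\tfrac{2\norm{\vect w}}{t}\Bigr),\qquad t<\rad/K,
\]
with $c_0$ an absolute constant (for $t\ge\rad/K$ one gets the bracket from the second line of the Proposition, with an extra $\rad$-dependent term of the indicated type). To obtain it, fix $\vect w$, put $\delta:=\dz(\vect w,\partial G_t)$, take a nearest boundary point $\vect g$ and the exterior ball $B_{\rad}(\vect y)\subset G_t^c$ supplied by $\gb$; one checks that $\vect w,\vect g,\vect y$ are collinear with $\norm{\vect w-\vect y}=\rad+\delta$ and, from $\ga$, that $B_{\rad-K(t-s)}(\vect y)\subset G_s^c$ for $s\in[(t-\rad/K)^+,t]$. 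Conditioning on $\vect W_t=\vect w$ turns $(\vect W_s)_{s\le t}$ into a Brownian bridge from $\vect 0$ to $\vect w$; reversing time, $q(t,\vect w)=\ndist_t(\vect w)\,\pr(\text{the bridge }\vect Z\text{ from }\vect w\text{ to }\vect 0\text{ has }\vect Z_{s'}\in G_{t-s'}\text{ for all }s')$, and a necessary condition for this over the window $s'\in[0,\,t\wedge(\rad/K)]$ is $\norm{\vect Z_{s'}-\vect y}\ge\rad-Ks'$. The radial process $\norm{\vect Z_{s'}-\vect y}$ is a Bessel-type diffusion started at $\rad+\delta$ with an added drift whose away-from-the-ball component is bounded in terms of $K$, $\tfrac{\di-1}{2(\rad-Ks')}$ and $\tfrac{\norm{\vect w}+\rad}{t-s'}$ (the three coming from the moving boundary, the curvature of the shrinking exterior ball, and the bridge's pull towards the origin); the classical reflection formula for the infimum of a Brownian motion with positive drift then yields the displayed $O(\delta)$ bound. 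The case $\vect w\in G_t^c$ is handled symmetrically, now controlling the chance that $\vect Z$ re-enters $\overline{G_t}$.

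Substituting this estimate into the collar bound and parametrising a neighbourhood of $\partial G_t$ by (signed distance)$\,\times\,\partial G_t$, with Jacobian $1+O(\dz(\cdot,\partial G_t)/\rad)$ (again from $\gb$), the integral splits, up to that correction, into $\bigl(\int_{\reals}\abs u\,\pr(M_h\ge\abs u-Kh)\,du\bigr)$ times $\int_{\partial G_t}\ndist_t(\vect g)(1+\norm{\vect g})\bigl(\sqrt{1/\pi t}+\tfrac{\di-1}{2\rad-Kt}+2K+2/t\bigr)\,dS(\vect g)$. The first factor is $O(\di^2h)$ by the reduction step — \emph{this is exactly where the extra factor $\dz(\vect w,\partial G_t)$ from the crux is indispensable: without it the first factor would only be $O(\di\sqrt h)$, i.e.\ the wrong order in $h$}. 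For the second factor, letting $v\downarrow0$ in $\gc$ (coarea formula) gives $\int_{\partial G_t}(1+\norm{\vect g})\ndist_t(\vect g)\,dS(\vect g)\le\cz$, so this factor is at most $\cz$ times the displayed bracket. Dividing by $h$, letting $h\to0$, and tracking the numerical constants yields $p(t)\le 8\di^2\cz(\cdots)$; the second line of the Proposition comes out of the same computation with the time-reversal window taken of length $\asymp\rad/K$ rather than $\asymp t$.

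The difficulty is concentrated entirely in the crux step — the linear vanishing of the killed density at a boundary that is merely Lipschitz in time and only one-sidedly regular in space. The time-reversal device reduces it to a one-dimensional first-passage problem for a drifted Bessel-type process, but justifying the comparison rigorously near a possibly very rough $\partial G_t$ — in particular verifying that the exterior ball at the nearest boundary point genuinely shields the bridge over a full window of length $\asymp t\wedge(\rad/K)$, and dealing with the side $\vect w\in G_t^c$, where $\gb$ gives a ball one is sitting \emph{in} rather than a ball to avoid — all while keeping every constant explicit, is the substantive part of the argument; the powers of $\di$ in the final constant then come out for free from the $\di$-dimensional oscillation estimate combined with the curvature term.
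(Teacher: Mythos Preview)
Your outline tracks the paper's argument closely: the Markov decomposition into (survival up to $t$)\,$\times$\,(exit in $(t,t+h]$), the time-reversal reduction of $\pr(\tau>t\mid\vect W_t=\vect z)$ to a first-passage problem for the radial part of a Brownian bridge avoiding the shrinking ball $B_{\rad-Ks}(\vect y)$ supplied by $\gb$ and $\ga$, and the drifted-Brownian comparison producing the $O(\dz(\vect z,\partial G_t))$ decay --- all of this is exactly what the paper does (its Lemmas~\ref{tab_lem}--\ref{linear_bound} and Lemma~\ref{inball_lem}).

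Two points where you diverge from the paper and where your version is shakier:

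\emph{The case $\vect w\in G_t^c$ is empty.} On $\{\tau>t\}$ the path has stayed in $G$ throughout $(0,t)$, so $\vect W_t\in G_t$ almost surely and $q(t,\cdot)$ vanishes identically off $G_t$. There is nothing to ``handle symmetrically''; the paper integrates over $G_t$ only (see~\eqref{tau_tpf}).

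\emph{The collar integration.} You parametrise a tubular neighbourhood of $\partial G_t$ by (signed distance)\,$\times\,\partial G_t$, invoke a Jacobian $1+O(\dz/\rad)$, and then pass to a surface integral $\int_{\partial G_t}(1+\norm{\vect g})\ndist_t(\vect g)\,dS(\vect g)$ via coarea and the limit $v\downarrow0$ in $\gc$. Under $\gb$ alone (an \emph{exterior} ball condition, no interior ball) $\partial G_t$ need not be a $C^{1,1}$ or even a Lipschitz hypersurface on the $G_t$ side, so the tubular-neighbourhood/Jacobian claim and the clean coarea limit are not obviously available. The paper sidesteps this entirely: it sets $Z:=\dz(\vect W_t,\partial G_t)\indicator_{\{\vect W_t\in G_t\}}$, writes the relevant integrals (its $I_1,I_2,I_3$) as $\int_0^\infty u(x)\,d\nu(x)$ with $\nu$ the distribution function of $Z$ and $u(x)=x\,e^{xK/\di-x^2/(2h\di)}$, integrates by parts, and uses $\gc$ in the raw form $\nu(x)\le\cz x$ (and the $(1+\norm{\cdot})$-weighted version for the $\norm{\vect z}$ term). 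No surface measure, no coarea, no Jacobian --- only the layer-mass bound that $\gc$ literally states. This is both simpler and rigorous under the stated hypotheses, and is the step you should replace in your write-up.
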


To prove the proposition, note that,  for any $t\in (0,1),$ setting $\tau_t :=  \inf \{s >t : (s,\vect{W}_s) \in \partial G \},$ one has, for $0<h<1-t$,
\begin{align}
\pr( \tau \in (t, t+h)) &= \int_{G_t} \pr(\tau \in (t,t+h) | \vect{W}_t = \vect{z}) \pr(\vect{W}_t \in d\vect{z})\notag\\
&= \int_{G_t} \pr(\tau > t |\vect{W}_t = \vect{z}) \pr(\tau_t < t+h | \vect{W}_t = \vect{z}) \pr(\vect{W}_t \in d\vect{z}).\label{tau_tpf}
\end{align}
Next we will bound the two factors in the integrand on the right hand side of~\eqref{tau_tpf}. It will be convenient to use the notation
\begin{equation*}
r(\vect{z}) := \dz(\vect{z}, \partial G_t)
\end{equation*}
(for a fixed $t$). The following lemma gives a bound for the first factor.

%===========================================================
%                            Conditional NBCP of G bound lemma
%===========================================================

\begin{lemma}\label{tab_lem}
For $t \in (0,1)$, one has
\begin{equation*}
\frac{\pr(\tau > t  | \vect{W}_t = \vect{z} )}{2 r(\vect{z})} \le \left\{ \begin{array}{ll}
\sqrt{\frac{1}{\pi t}} +  \frac{2(\norm{\vect{z}}+r(\vect{z}))}{t} +\frac{\di-1}{2 \rad-Kt} +2K, & t < \rad/K,\\
\sqrt{\frac{K}{\pi \rad}} + \frac{\norm{\vect{z}}+r(\vect{z})+\rad/2}{t-\rad/(2K)} + \frac{\di-1}{\rad} + K, &  t \ge \rad/K.
\end{array}\right.\\
\end{equation*}
\end{lemma}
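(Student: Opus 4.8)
The plan is to localise at the point of $\partial G_t$ nearest to $\vect z$, use the exterior-ball condition $\gb$ there to confine $\vect W_s$ away from a ball, and reduce the resulting probability to a one-dimensional Brownian-bridge estimate, in the spirit of the univariate argument of \cite{Borovkov05} applied cross-section by cross-section.

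\textit{Geometric set-up.} Take $\vect z\in G_t$ (the only case used, since the integral in the proof of Proposition~\ref{tau_thm} runs over $G_t$, where $r(\vect z)>0$). Let $\vect g\in\partial G_t$ realise $r(\vect z)=\norm{\vect z-\vect g}$ and let $B_\rad(\vect y)\subseteq G_t^c$ be a ball as in $\gb$ with $\vect g\in\partial B_\rad(\vect y)$. The open balls $B_{r(\vect z)}(\vect z)\subseteq G_t$ and $B_\rad(\vect y)\subseteq G_t^c$ are disjoint and meet at $\vect g$, hence are externally tangent there; thus $\vect z=\vect y+\ell\vect u$, where $\ell:=\norm{\vect z-\vect y}=\rad+r(\vect z)$ and $\vect u:=(\vect z-\vect y)/\ell$ is a unit vector, and moreover $\norm{\vect y}\le\norm{\vect z}+\ell$.

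\textit{Key inclusion.} Put $\xi_s:=\norm{\vect W_s-\vect y}$. Then
\[
\{\tau>t\}\subseteq\bigl\{\xi_s\ge\rad-K(t-s)\ \text{for all }s\in(0,t)\bigr\}.
\]
Indeed, if $\xi_s<\rad-K(t-s)$ for some $s<t$, then $\vect W_s\in B_\rad(\vect y)\subseteq G_t^c$ and $\dz(\vect W_s,G_t)\ge\dz(\vect W_s,B_\rad(\vect y)^c)=\rad-\xi_s>K(t-s)$; since $\ga$ gives $G_s\subseteq G_t^{(K(t-s))}$, this puts $\vect W_s$ into the interior of $G_s^c$, and $\ga$ applied once more (to thicken in time) gives $(s,\vect W_s)\in\mathrm{int}(G^c)$. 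Since, conditionally on $\vect W_t=\vect z\in G_t$, the path lies in the open set $G$ just before time $t$, it must then meet $\partial G$ inside $(0,t)$, contradicting $\tau>t$. Consequently
\[
\pr(\tau>t\mid\vect W_t=\vect z)\le\pr\bigl(\xi_s\ge\rad-K(t-s)\ \forall s\in(0,t)\bigm|\vect W_t=\vect z\bigr),
\]
and, given $\vect W_t=\vect z$, the process $\xi$ is an $m$-dimensional Bessel bridge from $\norm{\vect y}$ to $\xi_t=\ell$; its terminal value overshoots the barrier by exactly $\ell-\rad=r(\vect z)$, which accounts for the factor $2r(\vect z)$.

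\textit{One-dimensional estimate.} On the event above one has $\xi_s\ge\rad-Kt>0$ when $t<\rad/K$, so in the decomposition $d\xi_s=db_s+\mathfrak d_s\,ds$ ($b$ a one-dimensional Brownian motion) the Bessel component of $\mathfrak d_s$ is $\le\frac{m-1}{2(\rad-Kt)}$, the barrier $\rad-K(t-s)$ has slope $K$, and the bridge component of $\mathfrak d_s$ is controlled (via $\langle\vect W_s-\vect y,\vect z-\vect y\rangle\le\xi_s\ell$) by quantities of the order of $\norm{\vect z}$, $r(\vect z)$ and $K$. Stripping off these drift and barrier contributions reduces the task to estimating the probability that a driftless bridge stays above a level lying only $r(\vect z)$ beneath its terminal value; the reflection identity $1-e^{-2ab/t}\le 2ab/t$ (with terminal gap $a=r(\vect z)$ and initial gap $b\le\norm{\vect z}+r(\vect z)+Kt$) together with the bound $\sqrt{2/(\pi t)}$ on the density of the running minimum of Brownian motion then delivers the four summands of the first displayed line. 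For $t\ge\rad/K$ the barrier is non-positive on $s\le t-\rad/K$, so the inclusion is informative only for $s\in(t-\rad/K,t)$; restricting to $s\in(t-\rad/(2K),t)$, where $\xi_s\ge\rad/2$ (hence the Bessel drift is $\le\frac{m-1}{\rad}$), the same reasoning on a window of length $\rad/(2K)$ yields the second line.

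\textit{Main difficulty.} The crux is the last step: the drift-and-barrier stripping must be carried out \emph{pathwise}, so that no multiplicative exponential factors are produced; the terminal gap must be preserved equal to $r(\vect z)$ throughout, since this is precisely what gives a rate linear (rather than square-root) in $r(\vect z)$; and the numerical constants must be tracked through both regimes. A smaller, essentially routine, point is the measure-theoretic justification in the key inclusion that a visit of $\vect W_s$ to $\mathrm{int}(G_s^c)$ forces $\tau<t$, which rests on $\ga$ (to pass from cross-sections to the space-time set) and on the path being in $G$ immediately before time~$t$.
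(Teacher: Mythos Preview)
Your geometric set-up and key inclusion are exactly right and coincide with the paper's: pick a nearest boundary point, invoke $\gb$ to produce an exterior ball $B_\rad(\vect y)$, and use $\ga$ to conclude that on $\{\tau>t\}$ the radial process $\xi_s=\norm{\vect W_s-\vect y}$ (conditionally on $\vect W_t=\vect z$, hence the radial part of an $m$-dimensional Brownian bridge) stays above the shrinking level $\rad-K(t-s)$. The paper does the same thing after an explicit time reversal, so that the bridge \emph{starts} at the point with overshoot $r(\vect z)$ and ends at $-\vect c$; your choice to work in forward time and exploit the terminal overshoot is equivalent by bridge symmetry.

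The gap is in your ``one-dimensional estimate.'' You propose to strip the drift and reduce to a \emph{driftless bridge}, then combine the reflection identity $1-e^{-2ab/t}\le 2ab/t$ with the running-minimum density bound $\sqrt{2/(\pi t)}$. These two ingredients do not fit together to produce the four summands: the bridge reflection formula yields a single term of the form $b/t$ after division by $2r(\vect z)$, and gives no $\sqrt{1/(\pi t)}$ contribution, nor does it separate the Bessel and slope pieces. More fundamentally, the radial process of the bridge is \emph{not} a one-dimensional bridge after any deterministic drift subtraction, because its drift $\frac{\langle\vect W_s-\vect y,\,\vect z-\vect W_s\rangle}{(t-s)\xi_s}+\frac{m-1}{2\xi_s}$ is path-dependent; you cannot ``strip it off'' and still land on a bridge with deterministic endpoints.

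What the paper actually does (and what your ``pathwise'' remark is reaching for) is a pathwise \emph{comparison principle}: on the window in question the drift of $\xi_s$ is dominated by a constant $c$ (built from the three pieces you identify), so $\xi_s\le \overline S_s:=\ell+cs+\widetilde W_s$ up to the first exit. This replaces the radial bridge process not by a driftless bridge but by a \emph{drifted Brownian motion}, for which one has the elementary bound
\[
\pr\Bigl(\sup_{0\le s\le u}(\widetilde W_s-cs)<\varepsilon\Bigr)\le \varepsilon\Bigl(\sqrt{2/(\pi u)}+2c^+\Bigr).
\]
Applying this with $\varepsilon=r(\vect z)$ and $c$ equal to the sum of the bridge, Bessel and barrier contributions, and choosing $u=t/2$ (resp.\ $u=\rad/(2K)$), is precisely what produces the four (resp.\ four) summands with the stated constants. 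So the missing piece is a one-line comparison lemma for SDEs with ordered drifts sharing the same Brownian driver, together with the displayed inequality for drifted Brownian motion; the bridge reflection identity plays no role.
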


The proof of Lemma \ref{tab_lem} uses our next lemma. Before we state the latter, we introduce for $u,v>0$ the (possibly truncated) cones
\begin{equation}\label{Bu}
C(v,u) := \{(s, \vect{x}) \in [0,v] \times \reals^{\di}: \norm{\vect{x}} \le u-Ks \},
\end{equation}
\begin{equation*}
C^*(v,u) := \{(s, \vect{x}) \in [0,v] \times \reals^{\di}: \abs{x_i} \le (u-Ks)/\sqrt{\di}, \: i=1, \ldots, \di\}.
\end{equation*}
Clearly, $C^*(v,u) \subset C(v,u)$.

%===========================================================
%                           Conditional NBCP of cone bound lemma
%===========================================================

We will slightly abuse notation by denoting by $\pr_{\vect{x}}$ the distribution on the canonical space corresponding to the Brownian motion process started at the point $\vect{W}_0 = \vect{x} \in \reals^{\di}$ and keeping the notation $\tau(H)$ for the stopping time \eqref{tau_set} for that process.

\begin{lemma}\label{l_tab}
For any $\vect{x},\vect{y} \in \reals^{\di}$ with $\norm{\vect{x}} > \rad$, we have
\small{
\begin{multline*}
\frac{\pr_{\vect{x}}(\tau(C(t,\rad)) > u | \vect{W}_t = \vect{y})}{\norm{\vect{x}}-\rad}
\\
\le \left\{ \begin{array}{ll}
\sqrt{\frac{2}{\pi u}} + 2 \Bigr( \frac{2(\norm{\vect{y}}-\rad)}{t} +\frac{\di-1}{2\rad-Kt} +2K\Bigr)^+, & u \le t/2, \, t < \rad/K,\\
\sqrt{\frac{2}{\pi u}} + 2 \Bigr(\frac{\norm{\vect{y}}-\rad/2}{t-\rad/(2K)} + \frac{\di-1}{\rad} + K \Bigr)^+, &  u \le \rad/(2K), \, t \ge \rad/K,
\end{array}\right.
\end{multline*}
}
where $x^+ := \max \{0,x\}$.
\end{lemma}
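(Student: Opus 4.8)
The plan is to bound the conditional probability that the Brownian bridge from $\vect{x}$ to $\vect{y}$ (on $[0,t]$) stays inside the truncated cone $C(t,\rad)$ up to time $u$. The first move is to reduce the cone $C(t,\rad)$ to its inscribed "box-cone" $C^*(t,\rad)$, which is legitimate because $C^*(t,\rad)\subset C(t,\rad)$ and the box-cone decouples the $m$ coordinates: a path $(s,\vect{W}_s)$ fails to lie in $C^*(t,\rad)$ as soon as some coordinate $W_s^{(i)}$ exceeds $(\rad-Ks)/\sqrt m$ in absolute value. Thus $\pr_{\vect{x}}(\tau(C(t,\rad))>u\mid \vect{W}_t=\vect{y})\le \pr_{\vect{x}}(\tau(C^*(t,\rad))>u\mid \vect{W}_t=\vect{y})$, and since the bridge on $[0,t]$ has independent coordinates one can write the latter as a product $\prod_{i=1}^m q_i$, where $q_i$ is the probability that a one-dimensional Brownian bridge from $x_i$ to $y_i$ stays in the moving interval $\bigl(-(\rad-Ks)/\sqrt m,\ (\rad-Ks)/\sqrt m\bigr)$ for $s\in[0,u]$. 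Because $\norm{\vect{x}}>\rad$, at least one coordinate, say $x_{i_0}$, satisfies $|x_{i_0}|>\rad/\sqrt m$, so the corresponding factor $q_{i_0}$ is the survival probability of a one-dimensional bridge started strictly outside the moving interval; we bound all the other factors by $1$, leaving just $q_{i_0}$.

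Next I would linearise the moving boundary. The bridge must, in particular, exit the half-line on the side where it starts: it must cross the straight line $s\mapsto \rad/\sqrt m - Ks/\sqrt m$ (moving inward at rate $K/\sqrt m$). Subtracting this line from the bridge turns the problem into a first-passage problem for a (time-changed) Brownian bridge started at height $(|x_{i_0}|-\rad/\sqrt m)>0$ above a \emph{fixed} level, but with a drift $-K/\sqrt m$ absorbed into the process. Equivalently, one conditions on $\vect{W}_t=\vect{y}$ and bounds the probability that the process $(|x_{i_0}|-\rad/\sqrt m)+$(bridge increment)$-Ks/\sqrt m$ stays positive on $[0,u]$. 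For a Brownian bridge this is an explicit Gaussian computation; the standard reflection-type bound for the bridge gives a survival probability of the form $\Phi(\cdot)-\Phi(\cdot)$ which, using $\Phi(b)-\Phi(a)\le (b-a)/\sqrt{2\pi}\cdot(\text{something})$ together with the inequality $e^{-x}\le 1$, is bounded by a term $\sqrt{2/(\pi u)}$ (the short-time scaling of the bridge near $s=0$) times the starting height, plus a correction coming from the drift $K/\sqrt m$ and the fact that the bridge endpoint is pinned at $\vect{y}$ rather than free. Collecting the pinning and drift corrections is where the terms $\dfrac{2(\norm{\vect{y}}-\rad)}{t}$, $\dfrac{m-1}{2\rad-Kt}$, $2K$ (in the regime $u\le t/2$, $t<\rad/K$) and their analogues in the second regime arise: the factor $\dfrac{1}{t}$ or $\dfrac{1}{t-\rad/(2K)}$ is the Gaussian density normalisation of the conditioning event, the $m-1$ (or $\di-1$) bookkeeps the other coordinates that were bounded by $1$ but still contribute to the conditional density, and $K$ (or $2K$) is the inward drift of the boundary. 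The two cases $u\le t/2$ with $t<\rad/K$ versus $u\le \rad/(2K)$ with $t\ge\rad/K$ correspond simply to whether the truncation time $v=t$ of the cone bites before the bridge time $t$ does; the case split keeps the denominators like $2\rad-Kt$ and $t-\rad/(2K)$ positive.

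The main obstacle I expect is the careful Gaussian estimate for the pinned one-dimensional survival probability with a linearly moving barrier: one needs a clean upper bound that simultaneously (i) captures the correct $u^{-1/2}$ blow-up as $u\downarrow 0$, (ii) stays bounded (indeed $\le \sqrt{2/(\pi u)}+\text{const}$) uniformly in the endpoint $\vect{y}$ and in $t$ within the stated range, and (iii) produces exactly the linear-in-$(\norm{\vect{x}}-\rad)$ normalisation on the left-hand side. Getting the constant $2$ in front of the bracket and the $(\cdot)^+$ truncation right requires being slightly generous in the estimates (e.g.\ replacing the endpoint-dependent term by its positive part and using $\norm{\vect{y}}\ge|y_{i_0}|$), but no genuinely new idea beyond standard bridge reflection identities and the elementary bound $\Phi(b)-\Phi(a)\le (b-a)_+/\sqrt{2\pi}$ should be needed.
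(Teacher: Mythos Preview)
Your proposal has a genuine gap at the very first step. The reduction to the box-cone $C^*(t,\rad)\subset C(t,\rad)$ is the device the paper uses for the \emph{other} factor (Lemma~\ref{inball_lem}), where one needs an upper bound on $\pr(\tau(C)<h)$ with the process starting \emph{inside} the cone; there $\{\tau(C^*)<h\}$ is the event ``some coordinate leaves its slab'', which admits a union bound. In the present lemma, however, the process starts at $\vect{x}$ with $\norm{\vect{x}}>\rad$, hence \emph{outside} $C(t,\rad)$, and $\{\tau(C^*(t,\rad))>u\}$ is the event ``for every $s\le u$, \emph{some} coordinate lies outside its slab''. This is not a product event and cannot be written as $\prod_i q_i$ with $q_i$ the probability that the $i$-th bridge stays in its moving slab (that product is the probability of staying \emph{inside} $C^*$, which is zero here since already $|x_{i_0}|>\rad/\sqrt{\di}$ at $s=0$). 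Bounding instead by the single-coordinate event ``coordinate $i_0$ stays outside its slab for all $s\le u$'' gives only a \emph{lower} bound on $\pr(\tau(C^*)>u)$, since that event is a subset of $\{\tau(C^*)>u\}$. The box-cone trick simply goes the wrong way for this lemma, and there is no easy repair: the complement of a box has no useful coordinatewise product structure.

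The paper's argument is entirely different and does not decouple coordinates. It passes to the radial process $S_s=\norm{\vect{B}_s}$ of the Brownian bridge from $\vect{x}$ to $\vect{y}$ on $[0,t]$, derives via It\^o's formula the one-dimensional SDE
\[
dS_s=\Bigl(\frac{\xi_s(\vect{y})-S_s}{t-s}+\frac{\di-1}{2S_s}\Bigr)\,ds+d\widetilde W_s,\qquad \xi_s(\vect{y})\le\norm{\vect{y}},
\]
and then dominates $S_s$ from above, on $[0,u]$ and before $S$ reaches a level $a<\norm{\vect{x}}$, by the Brownian motion with constant drift $\overline S_s=\norm{\vect{x}}+\bigl(\frac{\norm{\vect{y}}-a}{t-t_0}+\frac{\di-1}{2a}\bigr)s+\widetilde W_s$, using the comparison Lemma~\ref{drift_lem}. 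The event $\{\tau(C(t,\rad))>u\}=\{\inf_{s\le u}(S_s-\rad+Ks)>0\}$ is thus contained in the corresponding event for $\overline S$, whose probability is handled by Lemma~\ref{linear_bound}. In particular, the term $\frac{\di-1}{2\rad-Kt}$ (resp.\ $\frac{\di-1}{\rad}$) that you attribute to ``the other coordinates bounded by~$1$'' is in fact the frozen Bessel drift $\frac{\di-1}{2a}$ with $a=\rad-Kt/2$ (resp.\ $a=\rad/2$); the two regimes in the statement correspond exactly to the choices $(t_0,a)=(t/2,\,\rad-Kt/2)$ for $t<\rad/K$ and $(t_0,a)=(\rad/(2K),\,\rad/2)$ for $t\ge\rad/K$.
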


Note that the above upper bounds agree at $t = \rad/K$.

To prove Lemma \ref{l_tab}, we will require the following two additional lemmas. For a univariate process $X = \{X_t\}_{t \ge 0}$ and $x \in \reals$, set
\begin{equation}\label{tau_a}
\eta_x(X) := \inf\{t \ge 0: X_t = x \}.
\end{equation}
%===========================================================
%                     Dominating SP lemma (cf. Gikhman/Skorokhod)
%===========================================================

\begin{lemma}\label{drift_lem}
Let $\{W_t\}_{t \ge 0}$ be the standard univariate Brownian motion given on a filtered probability space,  $\{Y_t\}_{t \ge 0}$ a continuous adapted process on the same space. Let $X^{(1)}_t$ and $X^{(2)}_t$ be strong unique solutions of the stochastic differential equations {\em (SDEs)}
\begin{equation*}
dX^{(i)}_t= a_i (t,X^{(i)}_t,Y_t)dt + dW_t, \quad X^{(i)}_0 = x_0, \quad i=1,2,
\end{equation*}
where $a_i$ are continuous. Suppose that, for a given $l < x_0$, one has $a_1(t,x,y) < a_2(t,x,y)$ for all $(t,x) \in [0,\infty) \times (l,\infty)$, $y \in \reals$. Then $X^{(1)}_t < X^{(2)}_t$ a.s.\ for all $t \in (0,\eta_l(X^{(1)}))$.
\end{lemma}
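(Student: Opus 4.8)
The plan is to prove this by a standard comparison argument for one-dimensional SDEs with a common driving Brownian motion, localised to the interval $(l,\infty)$. Since $X^{(1)}$ and $X^{(2)}$ are driven by the same $\{W_t\}$ and share the same initial condition $x_0$, their difference $\Delta_t := X^{(2)}_t - X^{(1)}_t$ has no martingale part: from the two SDEs,
\[
\Delta_t = \int_0^t \bigl( a_2(s,X^{(2)}_s,Y_s) - a_1(s,X^{(1)}_s,Y_s) \bigr)\, ds,
\]
so $t \mapsto \Delta_t$ is continuous and of bounded variation with $\Delta_0 = 0$. First I would split the integrand as
\[
a_2(s,X^{(2)}_s,Y_s) - a_1(s,X^{(1)}_s,Y_s) = \underbrace{\bigl(a_2(s,X^{(2)}_s,Y_s) - a_2(s,X^{(1)}_s,Y_s)\bigr)}_{(\mathrm{I})} + \underbrace{\bigl(a_2(s,X^{(1)}_s,Y_s) - a_1(s,X^{(1)}_s,Y_s)\bigr)}_{(\mathrm{II})},
\]
so that term (II) is strictly positive whenever $X^{(1)}_s \in (l,\infty)$, by hypothesis, and term (I) can be controlled in terms of $\Delta_s$.

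The key step is the usual sign argument: fix $\omega$ and a time horizon $t_0 < \eta_l(X^{(1)})$, and let $\sigma := \inf\{s \ge 0 : \Delta_s \le 0\}$. Suppose for contradiction that $\sigma \le t_0$. On $[0,\sigma]$ we have $X^{(1)}_s > l$ (since $s < \eta_l(X^{(1)})$) and $\Delta_s \ge 0$, hence $X^{(2)}_s \ge X^{(1)}_s > l$; on this interval both $X^{(1)}_s, X^{(2)}_s$ lie in $(l,\infty)$. Because $a_1, a_2$ are continuous, they are bounded on the (compact, by path-continuity) set swept out by $(s, X^{(1)}_s, Y_s)$ and $(s, X^{(2)}_s, Y_s)$ for $s \in [0,\sigma]$; in particular term (I) is bounded in absolute value by some constant $M = M(\omega, t_0)$ — note we do \emph{not} need a Lipschitz bound, just local boundedness, because we only need a crude estimate. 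Then for $s \le \sigma$,
\[
\Delta_s \ge \int_0^s (\mathrm{II})\, dr - \int_0^s |(\mathrm{I})|\, dr \ge \int_0^s \bigl(a_2(r,X^{(1)}_r,Y_r) - a_1(r,X^{(1)}_r,Y_r)\bigr)\, dr - M s.
\]
This crude bound alone is not enough to reach a contradiction at $s = \sigma$, so instead I would argue directly at the crossing time: at $s = \sigma$ we have $\Delta_\sigma = 0$ by continuity, while $\dot\Delta_\sigma$ (in the sense of the a.e.\ derivative, or via a one-sided limit of difference quotients) equals $(\mathrm{I})|_{s=\sigma} + (\mathrm{II})|_{s=\sigma}$; but at $s=\sigma$, $\Delta_\sigma = 0$ forces $X^{(2)}_\sigma = X^{(1)}_\sigma$, so term (I) vanishes, while term (II) $= a_2(\sigma,X^{(1)}_\sigma,Y_\sigma) - a_1(\sigma,X^{(1)}_\sigma,Y_\sigma) > 0$ since $X^{(1)}_\sigma > l$. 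Hence $\Delta$ is strictly increasing through $\sigma$, contradicting the definition of $\sigma$ as the first time $\Delta$ returns to $0$ (more precisely: $\Delta$ is $0$ at $\sigma$ but strictly negative for times slightly past any putative first crossing, whereas the derivative computation shows $\Delta$ is increasing at $\sigma$, so $\Delta$ cannot have just come down to $0$). The same derivative argument at $s = 0$, where $\Delta_0 = 0$ and $X^{(1)}_0 = x_0 > l$, shows $\Delta$ is strictly increasing at $0$, so $\sigma > 0$ and $\Delta_s > 0$ for $s \in (0, t_0]$; since $t_0 < \eta_l(X^{(1)})$ was arbitrary, this gives $X^{(1)}_t < X^{(2)}_t$ for all $t \in (0, \eta_l(X^{(1)}))$ a.s.

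I expect the main obstacle to be making the "derivative at the crossing time" step fully rigorous without imposing a Lipschitz condition on the $a_i$ (which the statement deliberately avoids). The clean way around this is to avoid pointwise derivatives entirely: work on a short interval $[\sigma, \sigma + \delta]$ and use continuity of $a_2(\cdot, \cdot, \cdot)$ together with path-continuity of $X^{(1)}, X^{(2)}, Y$ to conclude that term (II) stays bounded below by a positive constant $\kappa > 0$ on $[\sigma, \sigma+\delta]$ (for $\delta$ small), while term (I) $= a_2(s,X^{(2)}_s,Y_s) - a_2(s,X^{(1)}_s,Y_s)$ is small in absolute value — say $\le \kappa/2$ — on $[\sigma, \sigma+\delta']$ for possibly smaller $\delta'$, again purely by uniform continuity of $a_2$ on a compact set and the fact that $|X^{(2)}_s - X^{(1)}_s| = |\Delta_s|$ is small near $\sigma$. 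Then $\Delta_s = \Delta_\sigma + \int_\sigma^s((\mathrm{I}) + (\mathrm{II}))\,dr \ge 0 + (\kappa - \kappa/2)(s-\sigma) > 0$ for $s \in (\sigma, \sigma+\delta']$, which is the needed contradiction (and, applied at $\sigma = 0$, gives positivity just after $0$). This is routine but must be written carefully; everything else is bookkeeping.
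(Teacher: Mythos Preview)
Your argument is correct and follows essentially the same route as the paper: both note that $\Delta_t := X^{(2)}_t - X^{(1)}_t$ is an integral of a continuous function (hence $C^1$), that at any zero of $\Delta$ one has $X^{(1)} = X^{(2)}$ so the integrand reduces to $a_2 - a_1 > 0$, and then run the first-return-to-zero contradiction. Two remarks: your (I)/(II) split and the uniform-continuity workaround are unnecessary detours, since $\Delta$ is genuinely $C^1$ and the pointwise derivative $\Delta'(\sigma) = a_2(\sigma,X^{(1)}_\sigma,Y_\sigma) - a_1(\sigma,X^{(1)}_\sigma,Y_\sigma) > 0$ is already rigorous; and your definition $\sigma := \inf\{s \ge 0 : \Delta_s \le 0\}$ gives $\sigma = 0$ identically (because $\Delta_0 = 0$), so you should first establish positivity near $0$ and then take $\sigma$ to be the first zero in $(0,\eta_l)$, exactly as the paper does.
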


The proof of Lemma \ref{drift_lem} below follows the argument proving a somewhat weaker assertion of Lemma $4$ on p.120 of \cite{gs}.

\begin{proof}
Define the continuously differentiable function
\begin{equation*}
\Delta(t) := X^{(2)}_t-X^{(1)}_t = \int_0^t (a_2 (s,X^{(2)}_s,Y_s)-a_1 (s,X^{(1)}_s,Y_s)) \, ds, \quad t<\eta_l.
\end{equation*}
Then, for all points $t<\eta_l$ with $\Delta(t) = 0$, we have that $X^{(1)}_t = X^{(2)}_t$, and so at these points
\begin{equation*}
\Delta'(t) = a_2 (t,X^{(2)}_t,Y_t)-a_1 (t,X^{(1)}_t,Y_t) >0.
\end{equation*}
In particular, we have $\Delta(0) = 0$, $\Delta'(0+) >0$. Therefore we can find a $\delta>0$ such that $\Delta(t)>0$ for all $0<t \le \delta$. Now suppose the set $\{t \in (0,\eta_l): \Delta(t)=0\}$ is not empty. Then for $t_1 := \inf \{t \in (0,\eta_l): \Delta(t)=0\}$ we have $\Delta(t_1) = 0$, $\Delta'(t_1) >0$, and so there exists a $\delta_1>0$ such that $\Delta(t)<0$ for $t \in [t_1-\delta_1,t_1]$. Therefore $\Delta(t)$ changes signs on the interval $[\delta, t_1- \delta_1]$, i.e., it takes on the value zero there, which contradicts the definition of $t_1$. We conclude that $\{t \in (0,\eta_l): \Delta(t)=0\}$ is empty a.s., and since $\Delta(t) >0$ for sufficiently small $t$, $\Delta(t) >0$ for all $t \in(0,\eta_l)$ as required.
\end{proof}

%===========================================================
%                BCP for linear BM Bound lemma (cf. Borovkov/Novikov)
%===========================================================

Recall that $\{W_t\}_{t \ge 0}$ is the standard univariate Brownian motion process.

\begin{lemma}\label{linear_bound}
For $c \in \reals$ and $\varepsilon>0$,
\begin{equation*}
\pr (\sup_{0 \le s \le t} (W_s - cs) < \varepsilon) \le \varepsilon \biggr( \sqrt{\frac{2}{\pi t}} + 2 c^+\biggr).
\end{equation*}
\end{lemma}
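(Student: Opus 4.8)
The plan is to reduce the inequality to a uniform pointwise bound on the density of the running maximum
\[
M_t := \sup_{0 \le s \le t}(W_s - cs),
\]
and then integrate in $\varepsilon$. Since $M_t \ge W_0 = 0$ and, for $t>0$, the distribution of $M_t$ has no atom at the origin (by the law of the iterated logarithm at $0$, $W_s - cs$ is strictly positive for some arbitrarily small $s>0$ a.s.), the map $\varepsilon \mapsto \pr(M_t < \varepsilon)$ is continuous on $[0,\infty)$, vanishes at $\varepsilon = 0$, and is the primitive of the density $p_{M_t}$. Hence it suffices to show $p_{M_t}(a) \le \sqrt{2/(\pi t)} + 2c^+$ for all $a \ge 0$, and then $\pr(M_t < \varepsilon) = \int_0^\varepsilon p_{M_t}(a)\,da$ yields the claim.

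To obtain $p_{M_t}$ I would invoke the classical expression for the law of the maximum of a Brownian motion with constant drift: for $a \ge 0$,
\[
\pr(M_t \le a) = \Phi\Bigl(\tfrac{a+ct}{\sqrt t}\Bigr) - e^{-2ca}\,\Phi\Bigl(\tfrac{ct-a}{\sqrt t}\Bigr),
\]
with $\Phi$ the standard normal c.d.f. This follows from the reflection principle — e.g.\ after the Girsanov change of measure that turns $W_s - cs$ into a standard Brownian motion, using the joint law of the running maximum and the terminal value of the latter. Differentiating in $a$ and using the elementary identity $e^{-2ca}\phi\bigl(\tfrac{a-ct}{\sqrt t}\bigr) = \phi\bigl(\tfrac{a+ct}{\sqrt t}\bigr)$, where $\phi = \Phi'$ (this is checked by expanding the exponents, and it collapses the two resulting $\phi$-terms into one), gives
\[
p_{M_t}(a) = \frac{2}{\sqrt t}\,\phi\Bigl(\tfrac{a+ct}{\sqrt t}\Bigr) + 2c\, e^{-2ca}\,\Bigl(1-\Phi\bigl(\tfrac{a-ct}{\sqrt t}\bigr)\Bigr), \qquad a \ge 0.
\]

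The bound is then immediate: $\phi \le \phi(0) = 1/\sqrt{2\pi}$ makes the first term at most $\sqrt{2/(\pi t)}$; for $c>0$ we have $e^{-2ca}\le 1$ and $1-\Phi \le 1$, so the second term is at most $2c$, while for $c\le 0$ the second term is non-positive. In either case $p_{M_t}(a) \le \sqrt{2/(\pi t)} + 2c^+$, and integrating over $a\in(0,\varepsilon)$ finishes the proof. (For $c\le 0$ one can alternatively argue directly and more crudely: $W_s - cs \ge W_s$ for $s\ge 0$, so $\pr(M_t < \varepsilon) \le \pr(\sup_{s\le t} W_s < \varepsilon) = \pr(|W_t| < \varepsilon) \le \varepsilon\sqrt{2/(\pi t)}$ by the reflection principle, which is the asserted bound since $c^+=0$.)

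I expect the only point requiring care to be the derivation/statement of the law of $M_t$: getting the signs and the $e^{-2ca}$ factor right in the reflection–Girsanov computation, and verifying the simplification $e^{-2ca}\phi\bigl(\tfrac{a-ct}{\sqrt t}\bigr) = \phi\bigl(\tfrac{a+ct}{\sqrt t}\bigr)$. Once the clean form of $p_{M_t}$ is in hand, the remainder is a one-line maximisation of the Gaussian density together with the trivial estimates $e^{-2ca}\le 1$ and $\Phi \ge 0$.
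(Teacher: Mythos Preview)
Your proof is correct and rests on the same explicit formula for $\pr(M_t\le\varepsilon)$ that the paper invokes; the only difference is that the paper bounds the CDF directly---writing it as $[\Phi(c\sqrt t+\varepsilon/\sqrt t)-\Phi(c\sqrt t-\varepsilon/\sqrt t)]+(1-e^{-2c^+\varepsilon})\Phi(c\sqrt t-\varepsilon/\sqrt t)$ and using $\sup\Phi'=1/\sqrt{2\pi}$ together with $1-e^{-x}\le x$---whereas you first differentiate to obtain the density $p_{M_t}$ and then integrate your uniform bound on it. The two arguments are essentially the same.
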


\begin{proof}
The probability on the left hand side above is known explicitly (see e.g.\ $1.1.4$ on p.250 of~\cite{Borodin}): denoting by $\Phi$ the standard normal distribution function,
\begin{align*}
\pr (\sup_{0 \le s \le t} (W_s - cs) < \varepsilon) &= \Phi \bigr(c \sqrt{t}+\varepsilon/\sqrt{t} \bigr) - e^{-2c \varepsilon} \Phi \bigr(c \sqrt{t}-\varepsilon/\sqrt{t} \bigr)\\
&\le \Phi \bigr(c \sqrt{t}+\varepsilon/\sqrt{t} \bigr) - e^{-2 c^+ \varepsilon} \Phi \bigr(c \sqrt{t}-\varepsilon/\sqrt{t} \bigr)\\
&\le \Phi \bigr(c \sqrt{t}+\varepsilon/\sqrt{t} \bigr) - \Phi \bigr(c \sqrt{t}-\varepsilon/\sqrt{t} \bigr) + (1-e^{-2 c^+ \varepsilon})\\
&\le \sup_{x \in \reals} \Phi'(x) \times 2\varepsilon/\sqrt{t} + 2 c^+ \varepsilon\\
&= \varepsilon \biggr(\sqrt{\frac{2}{\pi t}} + 2 c^+ \biggr).
\end{align*}
\end{proof}

%===========================================================
%               	    PROOF: Conditional NBCP of cone bound
%===========================================================

{\em Proof of Lemma~\ref{l_tab}.} Let $\vect{B} = \{\vect{B}_s = (B^{(1)}_s, \ldots, B^{(\di)}_s) \}_{0 \le s \le t}$ be an $\di$-dimensional Brownian bridge process starting at $\vect{x} \in \reals^{\di}$ at time $0$ and ending at $\vect{y} \in \reals^{\di}$ at time $t$.

In order to use Lemma \ref{drift_lem}, we will now derive an SDE for the radial process $S_s := \norm{\vect{B}_s}$ of $\vect{B}$. Recall that $\vect{B}$ satisfies the SDE
\begin{equation}\label{bb_sde}
d\vect{B}_s =\frac{\vect{y}- \vect{B}_s}{t-s} ds + d \vect{W}_s, \quad 0 < s < t,
\end{equation}
(see e.g.\ p.64 in \cite{Borodin}) By It\^o's formula, the squared radial process has stochastic differential
\begin{equation}\label{rad_sde}
dS^2_s = 2 \sum_{i=1}^{\di} B^{(i)}_s dB^{(i)}_s + \di \, ds, \quad 0 < s < t.
\end{equation}
Setting $\vect{\xi_s} := \vect{B}_s/S_s$, we have $\norm{\vect{\xi_s}} \equiv 1$ and therefore
\begin{equation}\label{xi_bound}
\xi_s(\vect{y}) := \vect{\xi_s}\vect{y}^T \le \norm{\vect{y}},
\end{equation}
where $\vect{y}^T$ denotes the transpose of $\vect{y}$. Then, for $0 < s < t$, we have from \eqref{bb_sde} and \eqref{rad_sde} that
\begin{align*}
dS_s^2 &= 2 \sum_{i=1}^{\di} B^{(i)}_s \biggr( \frac{y_i- B^{(i)}_s}{t-s} ds + dW^{(i)}_s \biggr) +\di \, ds\\
&= 2 \left(\frac{\vect{B}_s\vect{y}^T-S_s^2}{t-s} + \frac{\di}{2} \right)ds + 2\vect{B}_s d \vect{W}_s^T\\
&= 2 \left(\frac{S_s \xi_s(\vect{y}) -S_s^2}{t-s} + \frac{\di}{2} \right)ds + 2S_s \vect{\xi}_s d \vect{W}_s^T\\
&= 2 \left(\frac{S_s \xi_s(\vect{y})-S_s^2}{t-s} + \frac{\di}{2} \right)ds + 2S_s d \widetilde{W}_s,
\end{align*}
where $\{\widetilde{W}_t\}_{t \ge 0}$ is a standard univariate Brownian motion, and the last equality follows from Theorem $8.4.2$ in \cite{Oksendal}.

Using the above SDE for $\{S^2_s\}$ and It\^o's formula with $f(x)=\sqrt{x}$, we have
\begin{align*}
dS_s &= f'(S^2_s)dS^2_s + \frac{1}{2}f''(S^2_s)(dS^2_s)^2\\
&= \frac{1}{2 S_s} \biggr[ 2 \left(\frac{S_s \xi_s(\vect{y})-S_s^2}{t-s} + \frac{\di}{2} \right)ds + 2S_s d \widetilde{W}_s \biggr] - \frac{1}{8 S_s^3} (2 S_s)^2 ds\\
&= \left(\frac{\xi_s(\vect{y})-S_s}{t-s} + \frac{\di-1}{2 S_s} \right)ds + d \widetilde{W}_s, \quad 0<s<t.
\end{align*}

Now introduce, for a fixed $a<\norm{\vect{x}}$ and $t_0 \in (0,t),$ the reference process
\begin{equation*}
\overline{S}_s := \norm{\vect{x}} + \left( \frac{\norm{\vect{y}}-a}{t-t_0}+\frac{\di-1}{2a} \right)s + \widetilde{W}_s, \quad s \ge 0,
\end{equation*}
Since $\norm{\vect{y}} \ge \xi_s(\vect{y})$ by \eqref{xi_bound}, Lemma \ref{drift_lem} implies that, for all $s~\in~[0,\min \{ t_0, \eta_a(S) \}]$, one has $\overline{S}_s \ge S_s$ a.s.

Consider first the case $t \ge \rad/K$  and set $t_0 := \rad/(2K)$, $a := \rad/2$. Then, for all $u \le  t_0$, one has
\begin{align}
\pr_{\vect{x}}(\tau(&C(t,\rad)) > u | \vect{W}_t = \vect{y})\notag\\
&= \pr \Bigr(\inf_{0 \le s \le u} (S_s -\rad+Ks)> 0\Bigr)\notag\\
&\le \pr \Bigr(\inf_{0 \le s \le u} (\overline{S}_s - \rad + Ks) >0\Bigr)\notag\\
&= \pr \biggr[\inf_{0 \le s \le u} \biggr(\norm{\vect{x}}+ \biggr( \frac{\norm{\vect{y}}-\rad/2}{t-\rad/(2K)}+\frac{\di-1}{\rad} \biggr)s +W_s - \rad + Ks \biggr)>0 \biggr]\notag\\
&= \pr \biggr[\sup_{0 \le s \le u} \biggr( W_s - \biggr(  \frac{\norm{\vect{y}}-\rad/2}{t-\rad/(2K)} + \frac{\di-1}{\rad} + K \biggr)s \biggr) <  \norm{\vect{x}}-\rad \biggr]\notag\\
&\le (\norm{\vect{x}}-\rad) \Biggr[\sqrt{\frac{2}{\pi u}} + 2 \biggr(\frac{\norm{\vect{y}}-\rad/2}{t-\rad/(2K)} + \frac{\di-1}{\rad} + K \biggr)^+\Biggr]\notag
\end{align}
by Lemma \ref{linear_bound}.

Now consider the case $t < \rad/K$ and set $t_0 := t/2$, $a := \rad-Kt/2$. Then, for all $u \le t_0$, we have
\begin{align}
\pr_{\vect{x}}(\tau(&C(t,\rad)) > u | \vect{W}_t = \vect{y} )\notag\\
&\le \pr \Bigr(\inf_{0 \le s \le u} (\overline{S}_s - \rad + Ks) >0 \Bigr)\notag\\
&= \pr \biggr [\inf_{0 \le s \le u} \biggr(\norm{\vect{x}}+ \biggr( \frac{\norm{\vect{y}}-\rad+Kt/2}{t/2}+\frac{\di-1}{2\rad-Kt} \biggr)s +W_s - \rad + Ks \biggr)>0 \biggr ]\notag\\
&= \pr \biggr [\sup_{0 \le s \le u} \biggr( W_s - \biggr(  \frac{2(\norm{\vect{y}}-\rad)}{t} +\frac{\di-1}{2\rad-Kt} + 2K\biggr)s \biggr) <  \norm{\vect{x}}-\rad \biggr]\notag\\
&\le (\norm{\vect{x}}-\rad) \Biggr [\sqrt{\frac{2}{\pi u}} + 2 \biggr( \frac{2(\norm{\vect{y}}-\rad)}{t} +\frac{\di-1}{2 \rad-Kt} + 2K\biggr)^+ \Biggr ].\notag
\end{align}
Lemma~\ref{l_tab} is proved. \ep

%===========================================================
%                             PROOF: Conditional NBCP of G bound
%===========================================================

{\em Proof of Lemma~\ref{tab_lem}.} Fix $t \in (0,1)$ and $\vect{z} \in G_t$. Reversing the time for the conditional Brownian motion process, we have for $t' \in (0,t)$,
\begin{align}
\pr(\tau > t  | \vect{W}_t = \vect{z} ) &= \pr(\vect{W}_s \in G_s, s \in (0,t) | \vect{W}_t = \vect{z} )\notag\\
&= \pr_{\vect{z}} (\vect{W}_s \in G_{t-s}, s \in (0,t) | \vect{W}_t = \vect{0})\notag\\
&\le \pr_{\vect{z}} (\vect{W}_s \in G_{t-s}, s \in (0,t') | \vect{W}_t = \vect{0}).\label{tab1}
\end{align}
One can clearly choose a $\vect{b} \in \partial G_t$ such that $\dz(\vect{z}, \vect{b}) = r(\vect{z})$. Then by condition $\gb$ there exists a ball $B_{\rad}(\vect{c})  \subset G_t^c$ with $\vect{b} \in \partial B_{\rad}(\vect{c})$. Using $B_{\rad}(\vect{c})$ as the base for the cone
\begin{equation*}
C := (0,\vect{c}) + C(t,\rad),
\end{equation*}
it follows from Lipschitz condition $\ga$ that $G_{t-s} \subset C_s^c$, $s \in [0,t]$. Therefore
\begin{align}
\pr_{\vect{z}} (\vect{W}_s \in G_{t-s}, s \in (0,t') | \vect{W}_t = \vect{0}) &\le \pr_{\vect{z}} (\vect{W}_s \in C_s^c, s \in (0,t') | \vect{W}_t = \vect{0})\notag\\
&= \pr_{\vect{z}-\vect{c}} (\tau(C(t,\rad)) > t' | \vect{W}_t = -\vect{c}).\label{conebound}
\end{align}
Since $\norm{-\vect{c}} \le \norm{\vect{z}}+r(\vect{z})+\rad$, we immediately obtain the bounds stated in Lemma~\ref{tab_lem} from Lemma~\ref{l_tab} with
\begin{equation*}
t' = u := \biggr\{ \begin{array}{ll}
t/2, & t < \rad/K,\\
\rad/(2K), &  t \ge \rad/K.
\end{array}\\
\end{equation*}
\ep

%===========================================================
%                                     Crossing in (t,t+h) lemma
%===========================================================

Now we will turn to bounding the second factor on the right hand side of~\eqref{tau_tpf}.

\begin{lemma}\label{inball_lem}
For $t \in (0,1)$, $\vect{z} \in G_t$, one has
\begin{equation*}
\pr(\tau_t < t+h | \vect{W}_t = \vect{z}) \le 2m  \exp \biggr(\frac{r(\vect{z}) K}{\di} - \frac{r(\vect{z})^2}{2h\di} \biggr), \quad h \in (0,\min\{r(\vect{z})/K,1-t \}).
\end{equation*}
\end{lemma}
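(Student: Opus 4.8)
The plan is to bound the probability that the Brownian motion, started from $\vect{z}\in G_t$ at time $t$, reaches the boundary $\partial G_{t+s}$ within time $h$, by reducing the multivariate crossing event to a collection of univariate crossing events and then applying a standard Gaussian tail bound. The first step is to recall from condition $\gb$ that there exists a ball $B_{\rad}(\vect{c})\subset G_t^c$ whose boundary touches $\partial G_t$ at a point $\vect{b}$ nearest to $\vect{z}$, so $\dz(\vect{z},\vect{b})=r(\vect{z})$ and $\norm{\vect{z}-\vect{c}}=\rad+r(\vect{z})$. For the process $\vect{W}$ to hit $\partial G$ by time $t+h$ it must, by the Lipschitz condition $\ga$ (which forces $B_{\rad-Ks}(\vect{c})\subset G_{t+s}^c$, hence the complementary cone $\widehat{C}$ with apex region $B_{\rad}(\vect{c})$ satisfies $G_{t+s}\subset \widehat C_s^c$ for $s\le \rad/K$), travel a distance at least $r(\vect{z})$ in the direction roughly pointing toward $\vect{c}$ — more precisely, the event $\{\tau_t<t+h\}$ is contained in the event that $\vect{W}_{t+\cdot}$ exits the shrinking ball $B_{\rad-Ks}(\vect{c})^c$ restricted appropriately, which in turn forces the radial-type displacement toward $\vect{c}$ to exceed $r(\vect{z})$.

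The key reduction is coordinate-wise. After translating so $\vect{c}$ is the origin and rotating, one can reduce the event $\{\tau_t<t+h\}$ to $\{\inf_{0\le s\le h}(\norm{\vect{z}-\vect{c}} - \langle \text{displacement},\text{unit direction}\rangle - \text{something}) \}$; but the cleanest route, matching the factor $2m$ and the $1/m$ in the exponent, is to note that if the Euclidean displacement of $\vect{W}$ over $[t,t+h]$ together with the cone's linear drift term of slope $K$ brings the process within reach of $\partial G_{t+s}$, then at least one of the $m$ coordinates of the displacement must have absolute value at least $(r(\vect{z})-Kh)/\sqrt m$, or after absorbing the drift, one gets a bound of the form $\sum_{i=1}^m \pr(\sup_{0\le s\le h}|W_s^{(i)}| \ge (r(\vect{z})-Ks)/\sqrt m \text{ for some }s)$. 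Then I would use the reflection-principle bound $\pr(\sup_{0\le s\le h}(W_s - c s)\ge a)\le e^{-a^2/(2h)+\text{drift correction}}$, or more directly $\pr(\sup_{0\le s\le h}(W_s - cs)\ge a)\le \exp(ca - a^2/(2h))$ (the elementary exponential-martingale / Doob bound, which gives exactly the $\exp(r(\vect{z})K/m - r(\vect{z})^2/(2hm))$ shape once $a=r(\vect{z})/\sqrt m$ scaling and slope $K/\sqrt m$ are inserted, and the factor $2$ accounts for the two-sided supremum, the factor $m$ for the union over coordinates).

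The main obstacle will be the geometric reduction step: carefully justifying that hitting $\partial G_{t+s}$ within time $h$ forces a displacement of size at least $r(\vect{z})$ (minus the $Ks$ slack coming from the Lipschitz drift of the cone's apex ball), uniformly in $s\in[0,h]$, and doing so in a way that cleanly splits across the $m$ coordinates with the $1/\sqrt m$ factors so that the exponent comes out as $r(\vect{z})^2/(2hm)$ rather than $r(\vect{z})^2/(2h)$. This is where condition $\gb$ (the uniform interior-ball-in-the-complement property) and $\ga$ (Lipschitz-in-time) must be combined: $\gb$ guarantees a ball of fixed radius $\rad$ sitting in $G_t^c$ touching $\partial G_t$ near $\vect{z}$, and $\ga$ guarantees this ball, shrunk at linear rate $K$, stays inside $G_{t+s}^c$ for $s\le \rad/K$, which is exactly why the hypothesis restricts $h<\min\{r(\vect{z})/K,1-t\}$ (ensuring $h<\rad/K$ as well since $r(\vect z)\le\rad$ would need checking, or the restriction is simply what is needed for the cone to remain non-degenerate). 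Once that containment is in place, the rest is the routine Gaussian estimate, and I would close by verifying the constants $2m$ and the exponent match.
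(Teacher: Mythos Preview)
Your coordinate-wise reduction and the final exponential bound are exactly the paper's route: inscribe a hypercube-cone $C^*(h,r(\vect z))$ in the round cone $C(h,r(\vect z))$, take a union bound over the $2m$ faces, and bound each one-dimensional linear-boundary crossing probability. The paper invokes Kendall's formula and then the tail bound $\overline\Phi(x)\le\tfrac12 e^{-x^2/2}$, whereas you propose the exponential-martingale/Doob bound; both yield the same exponent, so that part is fine.

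The geometric setup, however, is where your plan goes wrong. You invoke condition $\gb$ to produce an \emph{exterior} ball $B_\rad(\vect c)\subset G_t^c$ tangent to $\partial G_t$ at the nearest boundary point, and then claim that $\{\tau_t<t+h\}$ forces the process to enter (a shrunken version of) this ball, i.e.\ to exit $B_{\rad-Ks}(\vect c)^c$. That containment is false: the process can exit $G$ through any part of $\partial G$, not just near $\vect b$, so it need not approach $\vect c$ at all. The exterior ball from $\gb$ gives you no lower bound on the displacement needed to hit $\partial G$. What you actually need---and what you implicitly use in your second paragraph when you write ``the Euclidean displacement \ldots\ must have absolute value at least $(r(\vect z)-Kh)/\sqrt m$''---is the \emph{interior} ball: by the very definition of $r(\vect z)=\rho(\vect z,\partial G_t)$ one has $B_{r(\vect z)}(\vect z)\subset G_t$, and then $\ga$ alone gives $B_{r(\vect z)-Ks}(\vect z)\subset G_{t+s}$ for $s\le r(\vect z)/K$. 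Hence $\{\tau_t<t+h\}\subset\{\tau(C(h,r(\vect z)))<h\}$ for the cone centred at $\vect z$. Condition $\gb$ plays no role in this lemma; drop it and the restriction $h<r(\vect z)/K$ is exactly what keeps the interior cone non-degenerate, with no need to compare $r(\vect z)$ and $\rad$.
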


\begin{proof}
For $h$ from the specified interval, setting $\overline{\Phi}(x) := 1-\Phi(x)$, we have
\begin{align}
\pr(\tau_t < t+h | \vect{W}_t = \vect{z}) & \le \pr(\tau(C(h,r(\vect{z})))< h)\notag\\
& \le \pr(\tau(C^*(h,r(\vect{z})))< h)\notag\\
& \le 2\di \, \pr (\sup_{0 \le s \le h} (W(s)+Ks/\sqrt{\di}) \ge r(\vect{z})/\sqrt{\di})\notag\\
&= 2\di \int_0^h \frac{r(\vect{z})/\sqrt{\di}}{\sqrt{2\pi}s^{3/2}}  \exp \biggr(\frac{-(r(\vect{z})/\sqrt{\di}-Ks/\sqrt{\di})^2}{2s} \biggr)  ds\label{kendall}\\
&= r(\vect{z}) \sqrt{\frac{2\di}{\pi}} e^{r(\vect{z}) K/\di} \int_0^h s^{-3/2} \exp \biggr(-\frac{r(\vect{z})^2}{2s\di} - \frac{K^2 s}{2\di} \biggr)  ds\notag\\
&\le r(\vect{z}) \sqrt{\frac{2\di}{\pi}} e^{r(\vect{z}) K/\di} \int_0^h s^{-3/2}  \exp \biggr(-\frac{r(\vect{z})^2}{2s\di} \biggr)  ds\notag\\
&= 4\di \, e^{r(\vect{z}) K/\di} \overline{\Phi}(r(\vect{z})/\sqrt{h\di})\label{subu}\\
&\le 2\di  \exp \biggr(\frac{r(\vect{z}) K}{\di} - \frac{r(\vect{z})^2}{2h\di} \biggr),
\label{qbound}
\end{align}
where \eqref{kendall} follows from Kendall's formula (see e.g.\ relation 2.0.2 on p.295 of \cite{Borodin}), \eqref{subu} follows by making the substitution $u = r(\vect{z})/\sqrt{s\di}$, and \eqref{qbound} follows by using the bound $\overline{\Phi}(x) \le \frac{1}{2} e^{-x^2/2}$, $x>0$. The lemma is proved.
\end{proof}

%===========================================================
%          		     PROOF: First crossing density bound
%===========================================================

{\em Proof of Proposition~\ref{tau_thm}.}
Suppose that $t < \rad/K$. Then from \eqref{tau_tpf} and the bounds derived in Lemmas \ref{tab_lem}, \ref{inball_lem}, we have
\begin{align}
\pr( \tau \in (t, t+h)) &= \int_{G_t} \pr(\tau > t |\vect{W}_t = \vect{z}) \pr(\tau_t < t+h | \vect{W}_t = \vect{z}) \pr(\vect{W}_t \in d\vect{z})\notag\\
& \le 4\di \int_{G_t}  \biggr(\sqrt{\frac{1}{\pi t}} + \frac{2(\norm{\vect{z}}+r(\vect{z}))}{t} +\frac{\di-1}{2 \rad-Kt} +2K \biggr)  \notag\\
&\quad \times  r(\vect{z}) \exp \biggr(\frac{r(\vect{z}) K}{\di} - \frac{r(\vect{z})^2}{2h\di} \biggr) \pr(\vect{W}_t \in d\vect{z})\notag\\
& = 4\di \biggr( \biggr(\sqrt{\frac{1}{\pi t}} +\frac{\di-1}{2\rad-Kt} +2K \biggr)I_1 + \frac{2}{t}( I_2 + I_3) \biggr)\label{bnd1},
\end{align}
where
\begin{align*}
I_1 &:= \int_{G_t} r(\vect{z}) \exp \biggr( \frac{r(\vect{z})K}{\di} - \frac{r(\vect{z})^2}{2h\di} \biggr) \pr(\vect{W}_t \in d\vect{z}),\\
I_2 &:= \int_{G_t} r(\vect{z}) \norm{\vect{z}} \exp \biggr( \frac{r(\vect{z})K}{\di} - \frac{r(\vect{z})^2}{2h\di} \biggr) \pr(\vect{W}_t \in d\vect{z}),\\
I_3 &:= \int_{G_t} r(\vect{z})^2  \exp \biggr( \frac{r(\vect{z})K}{\di} - \frac{r(\vect{z})^2}{2h\di} \biggr) \pr(\vect{W}_t \in d\vect{z}).
\end{align*}

Set
\begin{equation*}
Z := r(\vect{W}_t) \indicator_{\{\vect{W}_t \in G_t\}},
\end{equation*}
where $\indicator_E$ is the indicator of event $E$. Then, for $u(x) := xe^{xK/\di-x^2/(2h\di)}$, we have
\begin{equation*}
I_1 = \ex Z \exp \biggr\{ \frac{ZK}{\di}-\frac{Z^2}{2h\di} \biggr\} = \int_0^{\infty} u(x) d \nu(x),
\end{equation*}
where $\nu(x)$ is the distribution function of $Z$. Integrating by parts and using the bound $\nu(x) < \cz x$, $x \in (0,v_0),$  from $\gc$, we obtain
\begin{align*}
I_1 &= [\nu(x)u(x)]_0^{\infty} - \int_0^{\infty} \nu(x) du(x) = -\int_0^{\infty} \nu(x) du(x)\notag\\
&=  \int_0^{\infty} \nu(x) \biggr( \frac{x^2}{h\di} - \frac{xK}{\di}-1 \biggr) \exp \biggr( \frac{xK}{\di} - \frac{x^2}{2h\di} \biggr) dx\\
&<  \frac{1}{h\di} \int_0^{\infty} \nu(x) x^2 \exp \biggr( \frac{xK}{\di} - \frac{x^2}{2h\di} \biggr) dx = \frac{1}{h\di} \left( \int_0^{v_0} + \int_{v_0}^{\infty} \right)(\cdots ) \, dx\\
&<  \frac{1}{h\di} \left( \cz \int_0^{\infty} x^3 \exp \biggr( \frac{xK}{\di} - \frac{x^2}{2h\di} \biggr) dx +  \int_{v_0}^{\infty} x^2 \exp \biggr( \frac{xK}{\di} - \frac{x^2}{2h\di} \biggr) dx \right)\\
&= \cz h \di \int_0^{\infty} s^3 e^{sK\sqrt{h/\di} - s^2/2} \, ds +\sqrt{h \di} \int_{v_0/\sqrt{h \di}}^{\infty} s^2 e^{sK\sqrt{h/\di} - s^2/2} \, ds\notag\\
&= 2\cz h\di + o(h),
\end{align*}
where the second last relation follows by making the substitution $s = x/\sqrt{h\di}$.

Using $\gc$ and following the same steps as above, we conclude that
\begin{equation*}
I_2 < 2\cz h\di + o(h).
\end{equation*}
Finally, it is even simpler to show that $I_3 = o(h)$.

Then, from \eqref{bnd1}, we have
\begin{equation*}
\pr( \tau \in (t, t+h)) < 8\di^2 \cz h  \biggr(\sqrt{\frac{1}{\pi t}} + \frac{\di-1}{2\rad-Kt} +2K + \frac{2}{t}  \biggr) + o(h).
\end{equation*}
It follows that $\tau$ has an absolutely continuous distribution specified by a density $p$ satisfying
\begin{equation*}
p(t) \le 8\di^2 \cz  \biggr(\sqrt{\frac{1}{\pi t}} + \frac{\di-1}{2\rad-Kt} +2K + \frac{2}{t}  \biggr), \quad t \in (0,\min \{ \rad/K,1 \}).
\end{equation*}

Now consider the case when $t \ge \rad/K$. Then, from \eqref{tau_tpf} and the bounds derived in Lemmas  \ref{tab_lem}, \ref{inball_lem}, we have
\begin{align*}
\pr( \tau & \in (t, t+h)) = \int_{G_t} \pr(\tau > t |\vect{W}_t = \vect{z}) \pr(\tau_t < t+h | \vect{W}_t = \vect{z}) \pr(\vect{W}_t \in d\vect{z})\\
& \le 4\di \int_{G_t} \biggr(\sqrt{\frac{K}{\pi \rad}} + \frac{\norm{\vect{z}}+r(\vect{z})+\rad/2}{t-\rad/(2K)} + \frac{\di-1}{\rad} + K \biggr)\\
&\quad \times r(\vect{z}) \exp \biggr(\frac{r(\vect{z}) K}{\di} - \frac{r(\vect{z})^2}{2h\di} \biggr)\pr(\vect{W}_t \in d\vect{z})\\
& = 4\di \biggr[ \biggr(\sqrt{\frac{K}{\pi \rad}}  + \frac{ \rad}{2t-\rad/K} + \frac{\di-1}{\rad}  +  K \biggr)I_1 + \frac{I_2 + I_3}{t-\rad/(2K)} \biggr],
\end{align*}
and therefore, for $t \in [\rad/K,1)$,
\begin{equation*}
\pr( \tau \in (t, t+h)) \le 8\di^2 h \cz \biggr(\sqrt{\frac{K}{\pi \rad}}  + \frac{\rad+2}{2t-\rad/K} +  \frac{\di-1}{\rad} + K  \biggr) .
\end{equation*}
As above, it follows that $\tau$ has density $p$ satisfying
\begin{equation*}
p(t) \le 8\di^2 \cz \biggr(\sqrt{\frac{K}{\pi \rad}}  + \frac{\rad+2}{2t-\rad/K} +  \frac{\di-1}{\rad} + K  \biggr), \quad t \in [\min \{ \rad/K,1 \},1).
\end{equation*}
Proposition~\ref{tau_thm} is proved. \ep

%===========================================================
%                             Unconditional NBCP of cone bound
%===========================================================

To prove Theorem \ref{main}, we will also use the following lemma that provides a bound for the integrand on the right hand side of \eqref{DeG_TPF}.

\begin{lemma}\label{bessel_lem}
For $\vect{x} \in \reals^{\di}$  and $0< r<\norm{\vect{x}}$, we have
\begin{equation*}
\frac{\pr_{\vect{x}}(\tau(C(t,r)) > t)}{2(\norm{\vect{x}}-r)} \le \left\{ \begin{array}{ll}
\sqrt{\frac{1}{\pi t}} + \frac{\di-1}{2 r-Kt} + K, & t < r/K,\\
\sqrt{\frac{K}{\pi r}} + \frac{\di-1}{r} + K, & t \ge r/K.
\end{array}\right.\\
\end{equation*}
\end{lemma}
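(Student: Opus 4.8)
The plan is to mimic the proof of Lemma~\ref{l_tab}, but without the Brownian bridge (no conditioning is needed now) and with the terminal time $t$ playing the role of the truncation time $u$ there. The first step is to pass to the radial process $R_s := \norm{\vect W_s}$. Since $\norm{\vect x} > r$, the starting point $(0,\vect x)$ lies outside $\overline{C(t,r)}$, so path continuity forces
\[
\{\tau(C(t,r)) > t\} \subseteq \{R_s > r - Ks \text{ for all } s \in (0,t]\}
\]
(the constraint being vacuous once $r - Ks < 0$). For $\di \ge 2$, $R$ is the strong solution of the Bessel-type SDE $dR_s = \frac{\di-1}{2R_s}\,ds + d\widetilde W_s$ driven by a standard univariate Brownian motion $\widetilde W$; for $\di = 1$ the drift term vanishes, $R = |W|$, and the argument below only simplifies.

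The second step restricts attention to the subinterval $[0,T_0]$, where $T_0 := t/2$ when $t < r/K$ and $T_0 := r/(2K)$ when $t \ge r/K$. On $[0,T_0]$ the moving barrier satisfies $r - Ks \ge a$, with $a := r - Kt/2$ in the first case and $a := r/2$ in the second, and $0 < a < r < \norm{\vect x}$ in both. I would then compare $R$ with the reference process $\overline R_s := \norm{\vect x} + \frac{\di-1}{2a}s + \widetilde W_s$: since $\frac{\di-1}{2x} < \frac{\di-1}{2a}$ for $x > a$, Lemma~\ref{drift_lem} gives $R_s \le \overline R_s$ on $[0,\eta_a(R))$. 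The key point is that on the event $E := \{R_s > r - Ks \ \forall s \in [0,T_0]\}$ one has $R_s > r - Ks \ge a$ throughout $[0,T_0]$, hence $\eta_a(R) > T_0$, so that $\overline R_s \ge R_s > r - Ks$ on all of $[0,T_0]$, giving
\[
\pr_{\vect x}(\tau(C(t,r)) > t) \le \pr(E) \le \pr\bigl(\overline R_s > r - Ks,\ 0 \le s \le T_0\bigr).
\]

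Finally, writing out $\overline R$ and using $-\widetilde W \stackrel{d}{=} W$, the rightmost probability equals $\pr\bigl(\sup_{0 \le s \le T_0}(W_s - cs) < \norm{\vect x} - r\bigr)$ with $c := \frac{\di-1}{2a} + K > 0$, so Lemma~\ref{linear_bound} (with $\varepsilon = \norm{\vect x} - r$) bounds it by $(\norm{\vect x} - r)\bigl(\sqrt{2/(\pi T_0)} + 2c\bigr)$. Dividing through by $2(\norm{\vect x}-r)$ and substituting --- so that $\tfrac{1}{2}\sqrt{2/(\pi T_0)} = \sqrt{1/(\pi t)}$ and $\frac{\di-1}{2a} = \frac{\di-1}{2r-Kt}$ when $t < r/K$, while $\tfrac{1}{2}\sqrt{2/(\pi T_0)} = \sqrt{K/(\pi r)}$ and $\frac{\di-1}{2a} = \frac{\di-1}{r}$ when $t \ge r/K$ --- yields exactly the two asserted bounds (which, incidentally, agree at $t = r/K$). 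I expect the only delicate point to be the comparison step: one must notice that $E$ keeps $R$ above the fixed level $a$ on the whole of $[0,T_0]$, so that $T_0 < \eta_a(R)$ and Lemma~\ref{drift_lem} is applicable there; this is exactly why $T_0$ is taken to be only half of the relevant survival window (of length $t$ when $t<r/K$, and $r/K$ when $t\ge r/K$) rather than the whole of it, and beyond that I anticipate no real difficulty.
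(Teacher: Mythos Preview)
Your proof is correct and follows essentially the same route as the paper's: reduce to the radial Bessel process, compare with a Brownian motion with constant drift $\frac{\di-1}{2a}$ on the half-window $[0,T_0]$ (with the same choices of $T_0$ and $a$ in the two regimes), and finish with Lemma~\ref{linear_bound}. Your explicit remark that the survival event $E$ forces $\eta_a(R) > T_0$, thereby justifying the use of Lemma~\ref{drift_lem} on all of $[0,T_0]$, is exactly the point the paper leaves implicit when it writes $s \in [0,\min\{T_0,\eta_a(R)\}]$ and then passes directly to the bound on $[0,T_0]$.
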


\begin{proof}
Denote by $R = \{R_s\}_{s \ge 0}$ an $\di$-dimensional Bessel process started at $\norm{\vect{x}}$ at time~$0$. One can stipulate that
\begin{equation*}
R_s = \sqrt{ \bigr(\norm{\vect{x}}+W^{(1)}_s \bigr)^2 + \bigr(W^{(2)}_s \bigr)^2 + \cdots + \bigr(W^{(\di)}_s \bigr)^2}, \quad s \ge 0,
\end{equation*}
and so
\begin{equation}\label{bessel2bm}
\pr_{\vect{x}} ( \tau(C(t,r)) > t ) = \pr (\inf_{0 \le s \le t} (R_s - r+Ks)> 0).
\end{equation}
As is well-known (see e.g.\ p.148 in~\cite{Oksendal}), $R$ satisfies the SDE
\begin{equation*}
dR_s = \frac{\di-1}{2R_s} ds + d\widetilde{W}_s, \quad s>0, \quad R_0 = \norm{\vect{x}},
\end{equation*}
$\{\widetilde{W}_s\}_{s \ge 0}$ being a standard univariate Brownian motion process.

Consider first the case $t < r/K$ and let
\begin{equation*}
\overline{R}_s := \norm{\vect{x}}+\frac{\di-1}{2r-Kt}s + \widetilde{W}_s, \quad s \in [0,t/2].
\end{equation*}
Then, by Lemma \ref{drift_lem}, we have $\overline{R}_s \ge R_s$ a.s.\ for all $s \in [0,\min \{t/2, \eta_{r-Kt/2}(R) \}]$ (cf.\ \eqref{tau_a}). From here, \eqref{bessel2bm} and Lemma \ref{linear_bound},   one has
\begin{align}
\pr_{\vect{x}} (\tau(C(t,r)) > t ) &\le \pr \Bigr(\inf_{0 \le s \le t/2} (\overline{R}_s - r + Ks)>0 \Bigr)\notag\\
&= \pr \Bigr(\inf_{0 \le s \le t/2} \Bigr(\norm{\vect{x}} + \frac{\di-1}{2r-Kt}s + \widetilde{W}_s - r + Ks \Bigr ) > 0 \Bigr)\notag\\
&= \pr \biggr[ \sup_{0 \le s \le t/2} \Bigr(\widetilde{W}_s - \Bigr(\frac{\di-1}{2 r-Kt} + K \Bigr)s \Bigr) < \norm{\vect{x}} - r \biggr]\notag\\
&\le 2(\norm{\vect{x}}-r) \biggr(\sqrt{\frac{1}{\pi t}} + \frac{\di-1}{2 r-Kt} + K \biggr).\notag
\end{align}

Now consider the case $t \ge r/K$ and let
\begin{equation*}
\overline{R}_s := \norm{\vect{x}}+\frac{\di-1}{r}s + \widetilde{W}_s, \quad s \in [0,r/(2K)].
\end{equation*}
Then, by Lemma \ref{drift_lem}, we have $\overline{R}_s \ge R_s$ a.s.\ for all $s \in [0,\min \{r/(2K), \eta_{r/2}(R) \}]$, and so from \eqref{bessel2bm} and Lemma \ref{linear_bound}, a similar derivation yields the bound
\begin{equation*}
\pr_{\vect{x}} ( \tau(C(t,r)) > t ) \le 2(\norm{\vect{x}}-r) \biggr(\sqrt{\frac{K}{\pi r}} + \frac{\di-1}{r} + K \biggr),
\end{equation*}
as required.
\end{proof}

%===========================================================
%                                         MAIN THEOREM PROOF
%===========================================================

Now we can complete the proof of Theorem~\ref{main}. The integrand on the right hand side of \eqref{DeG_TPF} has the form
\begin{align}
\pr \big(\tau^{(\varepsilon)}=1 | \tau = t \big) &= \int_{\partial G_t} \pr \big(\tau^{(\varepsilon)}=1, \vect{W}_t \in d \vect{z} | \tau = t \big)\notag\\
&= \int_{\partial G_t} \pr_{\vect{z}} \big( \vect{W}_s \in G_{t+s}^{(\varepsilon)}, s \in (0,1-t) \big)  \pr (\vect{W}_t \in d \vect{z} | \tau = t )\label{tpf_dg}.
\end{align}
For any $\vect{z} \in \partial G_t$, by $\gb$ there is a point $\vect{y}$ such that $B_{\rad}(\vect{y}) \subset G_t^c$ and $\vect{z} \in \partial B_{\rad}(\vect{y})$. Clearly, $B_{\be}(\vect{y}) \subset \big(G_t^{(\varepsilon)} \big)^c$, where $\be := \rad-\varepsilon$ (we assume without loss of generality that $\varepsilon < \beta/2$), and so $G_t^{(\varepsilon)} \subset \big(B_{\be}(\vect{y})  \big)^c$. By condition $\ga$, we then also have
\begin{equation*}
G_{t+s}^{(\varepsilon)} \subset G_t^{(\varepsilon+Ks)} \subset  \big(B_{\be-Ks}(\vect{y})  \big)^c = \big((0,\vect{y}) + C(1-t,\be) \big)^c_s, \quad s \le \frac{\be}{K},
\end{equation*}
using notation \eqref{Bu}. Therefore, since $\norm{\vect{z}-\vect{y}} - \be = \varepsilon$, one has from Lemma~\ref{bessel_lem} that
\begin{align*}
\pr_{\vect{z}} \big( \vect{W}_s \in G_{t+s}^{(\varepsilon)}, s \in (0,1-t) \big) & \le \pr_{\vect{z}-\vect{y}} ( \tau(C(1-t,\be))>1-t )\\
& \le 2 \varepsilon \left\{ \begin{array}{ll}
\sqrt{\frac{1}{\pi (1-t)}} + \frac{\di-1}{2\be-K(1-t)} + K, & 1-t < \be/K,\\
\sqrt{\frac{K}{\pi \be}} + \frac{\di-1}{\be} + K, & 1-t \ge \be/K.
\end{array}\right.
\end{align*}
Now it follows from \eqref{DeG_TPF} and \eqref{tpf_dg} that
\begin{align*}
D_{\varepsilon}(G) &\le  2 \varepsilon \int_0^{1-\be/K} \left( \sqrt{\frac{K}{\pi \be}} + \frac{\di-1}{\be} + K \right) p(t) \, dt\\
& \quad + 2 \varepsilon \int_{1-\be/K}^1 \left(\sqrt{\frac{1}{\pi (1-t)}} + \frac{\di-1}{2\be-K(1-t)} + K \right) p(t) \, dt\\
&\le 2 \varepsilon  \left( \sqrt{\frac{K}{\pi \be}} +  \frac{\di-1}{\be} + K \right) + 2 \varepsilon \int_0^1 \frac{p(t)\, dt}{\sqrt{\pi(1-t)}}.
\end{align*}
Here
\begin{align*}
\int_0^1 \frac{p(t)\, dt}{\sqrt{\pi(1-t)}} &\le \sqrt{\frac{2}{\pi}} + \int_{1/2}^1 \frac{p(t)\, dt}{\sqrt{\pi(1-t)}}\\
&\le \sqrt{\frac{2}{\pi}} + 8  \di^2 \cz \sqrt{\frac{2}{\pi}} \left(\sqrt{\frac{K}{\pi \rad}}  + 2(\rad+2) +  \frac{\di-1}{\rad} + K \right) =: c^*,
\end{align*}
where the second inequality follows by assuming without loss of generality that $\rad/K < 1/2$ and applying the second bound from Proposition \ref{tau_thm}. Recalling that $\be \ge \rad/2$, we conclude that
\begin{equation*}
D_{\varepsilon}(G) \le 2\varepsilon \left( c^* + \sqrt{\frac{2K}{\pi \rad}} +  \frac{2(\di-1)}{\rad} + K \right).
\end{equation*}
Theorem~\ref{main} is proved.
\ep

{\em Proof of Corollary~\ref{convex_cor}.} Condition {$\gb$} is clearly satisfied (with arbitrary large $\beta >0$) due to the convexity of $G_t$, so we only need to verify {$\gc$}.

We can assume without loss of generality that there exists a $\delta >0$ such that  $B_{\delta}(\vect{0}) \subset(\partial G)_0$  (for otherwise it is easy to see that, in view of {$\ga$}, one has $P(G)=0$ and the whole problem becomes trivial).

Consider first the case $t> t_0: = \delta/ (2K) $ (assuming that $t_0 <1$).

Introduce the sequence of spherical layers $C_k := B_k(\vect{0}) \backslash B_{k-1}(\vect{0})$, $k=1,2,\ldots $ As the cross-section $G_t$ is convex, it follows from Cauchy's surface area formula (see e.g.\ Theorem~5.5.2 on p.56 in \cite{Klain}) that the ($(m-1)$-dimensional) surface area of $\partial(G_t \cap B_k(\vect{0}))$ does not exceed the surface area of $B_k(\vect{0})$ which is equal to $k^{m-1} \omega_{m-1}$, where $\omega_{m-1}$ is the unit sphere area.

Therefore, again using the convexity of $G_t$, it follows that, for any $\varepsilon >0,$  the volume of $V_k := \bigr(G_t \backslash G_t^{(-\varepsilon)} \bigr) \cap C_k$ does not exceed $\varepsilon k^{m-1} \omega_{m-1}$. As the maximum value of the density of $\vect{W}_t$ on $V_k$ less than or equal to the density's value on $\partial B_{k-1}(\vect{0}),$ we conclude that
\begin{align*}
\ex(1+\norm{\vect{W}_t}; \vect{W}_t \in G_t \backslash G_t^{(-\varepsilon)} ) &= \sum_{k=1}^{\infty} \ex(1+\norm{\vect{W}_t}; \vect{W}_t \in V_k)\\
&\le \frac{\varepsilon  \omega_{m-1}}{(2 \pi t)^{m/2}} \sum_{k=1}^{\infty} (1+k)k^{m-1} e^{-(k-1)^2/2t}\\
&\le \cz \varepsilon
\end{align*}
for some   $\cz = \cz(m,t_0) <\infty ,$ as we only consider $t \in ( t_0,1)$.

Now turn to the case $t \in (0,t_0)$. By  $\ga$, for such $t$ one has $B_{\delta/2}(\vect{0}) \subset G_{t }$. Choosing $v_0 := \delta/4$ (which we can assume to be less that one without loss of generality), one can employ the same argument as above but  with the spherical layers $C_k := B_{k+ \delta/4}(\vect{0}) \backslash  B_{k-1+ \delta/4}(\vect{0})$, $k=1,2,\ldots,$ making use of the observation that the maximum value of the density of  $\vect{W}_t$ on the ``innermost layer" $C_1$ is bounded for $t \in (0,t_0)$.  The corollary is proved.\\
\ep

\end{section}

%------

\vspace{0.3cm}
\noindent \textbf{Acknowledgements.} This research was supported by the ARC Centre of Excellence for Mathematics and Statistics of Complex Systems, the Maurice Belz Trust and ARC Discovery Grant DP150102758.
The first author wishes to thank the School of Mathematical Sciences at Queen Mary, University of London, for providing a visiting position while this research was undertaken.

% Bibliography  ---------------------------------------------------------------------------------------------------------------------------

\end{document}